\documentclass[12pt]{article}
\usepackage[margin=3.4cm]{geometry}
\usepackage[utf8]{inputenc}
\usepackage[colorlinks=true,citecolor=black,linkcolor=black,urlcolor=blue]{hyperref}
\usepackage{amsthm,amsmath,amssymb}
\usepackage[normalem]{ulem}

\usepackage{pgf,tikz,tkz-graph}
\usetikzlibrary{arrows,shapes}

\def\Chip{{\rm Chip}}
\def\dist{{\rm dist}}
\def\Div{{\rm Div}}
\def\deg{{\rm deg}}
\def\minfas{{\rm minfas}}

\def\rank{{\rm rank}}

\theoremstyle{plain}
\newtheorem{thm}{Theorem}[section]

\newtheorem{lemma}[thm]{Lemma}
\newtheorem{prop}[thm]{Proposition}
\newtheorem{claim}[thm]{Claim}
\newtheorem*{claim*}{Claim}
\newtheorem{cor}[thm]{Corollary}

\theoremstyle{definition}
\newtheorem{defn}[thm]{Definition}

\newtheorem{rem}[thm]{Remark}

\theoremstyle{remark}

\title{Chip-firing based methods in the Riemann--Roch theory of directed graphs}

\author{B\'alint Hujter\thanks{Supported by the Hungarian National Research, Development and Innovation Office – NKFIH, grant no. K109240.}\\
	\small MTA-ELTE  Egerv\'ary  Research  Group, \\[-0.8ex]
	\small Department  of  Operations  Research,\\[-0.8ex]
	\small E\"otv\"os  Lor\'and University,\\[-0.8ex] 
	\small Budapest, Hungary.\\
	\small\tt hujterb@cs.elte.hu\\
	\and
	Lilla T\'othm\'er\'esz\thanks{Supported by the Hungarian National Research, Development and Innovation Office – NKFIH, grant no. K109240.}\\
	\small MTA-ELTE  Egerv\'ary  Research  Group, \\[-0.8ex]
	\small Department  of Computer Science,\\[-0.8ex]
	\small E\"otv\"os  Lor\'and University,\\[-0.8ex] 
	\small Budapest, Hungary.\\
	\small\tt tmlilla@cs.elte.hu\\
}

\begin{document}
	
	\date{}
	
	\maketitle
	
	\renewcommand{\thefootnote}{}
	
	
	\footnote{\emph{Key words and phrases}: chip-firing game; Riemann--Roch theorem for graphs.}
	
	\renewcommand{\thefootnote}{\arabic{footnote}}
	\setcounter{footnote}{0}

\begin{abstract}
Baker and Norine proved a Riemann--Roch theorem for divisors on undirected graphs.
The notions of graph divisor theory are in duality with the notions of the chip-firing game of Bj\"orner, Lov\'asz and Shor. We use this connection to prove Riemann--Roch-type results on directed graphs.
We give a simple proof for a Riemann--Roch inequality on Eulerian directed graphs, improving a result of Amini and Manjunath. 
We also study possibilities and impossibilities of Riemann--Roch-type equalities in strongly connected digraphs and give examples.
We intend to make the connections of this theory to graph theoretic notions more explicit via using the chip-firing framework.
\end{abstract}

\maketitle

\section{Introduction}

In 2007, Baker and Norine proved a graph-theoretic analogue of the classical Riemann--Roch theorem for algebraic curves \cite{BN-Riem-Roch}. 
This result inspired much research about Riemann--Roch theorems on tropical curves, lattices and directed graphs \cite{lattice,asadi,trop_GK}.

As pointed out already by Baker and Norine, Riemann--Roch theory on graphs is in close relationship with the chip-firing game of Bj\"orner, Lov\'asz and Shor \cite[Section 5.5]{BN-Riem-Roch}. There is a duality between these theories, that enables one to translate the notions of one theory to the language of the other.
This connection has already turned out to be fruitful in both directions.
NP-hardness results in both chip-firing (halting problem of chip-firing games on directed multigraphs \cite{farrell-levine-coeulerian}) and Riemann--Roch theory (the computation of the rank of a divisor \cite{rang_NP_nehez}) have recently been proved by translating the problem to the dual language.
We note that the graph divisor theory of Baker and Norine is sometimes also referred to as chip-firing, but in this paper by chip-firing we always mean the game of Bj\"orner, Lov\'asz and Shor.

This paper attempts to gather the knowledge about the Riemann--Roch type theorems on directed graphs, together with some new results. 
One of our goals is to make the connections of this theory to graph theoretic notions explicit. 
We feel that these connections are more clear-cut in the chip-firing language, hence we chose to use this framework in the paper.

The outline of the paper is the following: In Section \ref{sec:prelim} we introduce the necessary background. In Section \ref{sec:turnback-arc-set} we give a characterization of non-terminating chip-distributions of Eulerian digraphs using turnback arc sets. This characterization is a variant of some previous results \cite{BL92,Perrot,rang_NP_nehez}, designed to suit our purposes in the paper.
We note that some variant of the specialization of the characterization to undirected graphs plays a central role in each proof of the Riemann--Roch theorem for undirected graphs (see \cite{BN-Riem-Roch,cori,wilmes_bsc}). 

Using the characterization of non-terminating distributions, in Section \ref{s:RR-type-ineq}, we prove a Riemann--Roch type inequality for Eulerian digraphs which is the main result of this paper. This inequality was conjectured and proved for a special case by Amini and Manjunath \cite[Section 6.2]{lattice}. The Riemann--Roch theorem of undirected graphs by Baker and Norine also follows from our inequality as a special case.

In Section \ref{sec::RR_general}, we give a necessary and sufficient condition for a strongly connected digraph to have the Riemann--Roch property, and point out that this characterization is equivalent to the abstract Riemann--Roch theorem of Baker and Norine. We also investigate the natural Riemann--Roch property defined by Asadi and Backman, and show that an Eulerian digraph has the natural Riemann--Roch property if and only if it corresponds to an undirected graph.
We also give various examples of directed graphs satisfying or violating Riemann--Roch-type theorems. 

\section{Preliminaries} \label{sec:prelim}

\subsection{Basic notations}

Throughout this paper, \emph{digraph} means a finite directed graph that can have multiple edges but no loops.
The vertex set and edge set of a digraph $G$ are denoted by $V(G)$ and $E(G)$, respectively. 
We always assume our digraphs to be weakly connected, i.e.~the undirected graph obtained by forgetting the orientations is a connected graph.

The number of directed edges from $u$ to $v$ is denoted by $\overrightarrow{d}(u, v)$.
For a vertex $v$, the indegree and the outdegree of $v$ are denoted by $d^-(v)$ and $d^+(v)$, respectively (i.e, $d^-(v) = \sum_{u \in V(G)} \overrightarrow{d}(u, v)$ and $d^+(v) = \sum_{w \in V(G)} \overrightarrow{d}(v, w)$).

Throughout this paper, we think of undirected graphs as special digraphs, replacing each undirected edge $uv$ by a pair of oppositely directed edges. We call a digraph \emph{bidirected} if $\overrightarrow{d}(u,v)=\overrightarrow{d}(v,u)$ holds for each $u,v\in V$. These are exactly the digraphs corresponding to an undirected graph. 

Bidirected graphs form a subclass of Eulerian digraphs. We call a digraph $G$ \emph{Eulerian} if $d^+(v) = d^-(v)$ holds for each vertex $v \in V(G)$. It is well-known that a weakly connected Eulerian digraph is always \emph{strongly connected}, i.e.~it contains a directed path from $u$ to $v$ and a directed path from $v$ to $u$ for every pair of vertices $\{u,v\}$.

A digraph is \emph{acyclic} if it contains no directed cycle.
For an ordering $v_1,v_2,\dots,$ $v_{|V(G)|}$ of $V(G)$, we call an edge from $v_i$ to $v_j$ with $i<j$ a \textit{forward arc}, and an edge from $v_i$ to $v_j$ with $j<i$ a \textit{backward arc} with respect to this ordering. It is well-known that a digraph is acyclic if and only if there is an ordering of its vertices without backward edges. Such an ordering is often called \emph{topological}.

\subsection{Integer vectors and linear equivalence}

This paper focuses on two related fields, graph divisor theory, and the chip-firing game. The basic objects in both fields are integer vectors indexed by the vertices of a (di)graph $G$. We denote the set of such vectors by $\mathbb{Z}^{V(G)}$.

We think of the elements of $\mathbb{Z}^{V(G)}$ in three ways simultaneously:
\begin{itemize}
	\item[] as vectors $f \in \mathbb{Z}^{|V(G)|}$ with coordinates indexed by the vertices of $G$;
	\item[] as functions $f:V(G)\to \mathbb{Z}$;
	\item[] as elements of the free Abelian group on the set of vertices of $G$.
\end{itemize}
\noindent
For any $f \in \mathbb{Z}^{V(G)}$, the \emph{degree} of $f$ is the sum of its coordinates; we denote it by $\deg(f)$, i.e.\ $\deg(f) = \sum_{v \in V(G)} f(v)$. 
We denote by $f\geq g$ if $f(v)\geq g(v)$ $\forall v\in V(G)$.
 
We denote by $\mathbf{0}_G$ ($\mathbf{1}_G$) the vector in $\mathbb{Z}^{V(G)}$ with each coordinate equal to $0$ ($1$). For a vertex $v\in V(G)$, the characteristic vector of $v$ is denoted by $\mathbf{1}_v$, i.e., $\mathbf{1}_v(v)=1$ and $\mathbf{1}_v(u)=0$ for $u\neq v$. We use the notation $\mathbb{Z}^{V(G)}_+=\{x\in \mathbb{Z}^{V(G)}:\ x\geq\mathbf{0}_G \}$.
 
For any digraph $G$, $\mathbf{d}^+_G \in \mathbb{Z}^{V(G)}$ ($\mathbf{d}^-_G \in \mathbb{Z}^{V(G)}$) is the vector with $\mathbf{d}^+_G(v) = d^+(v)$ ($\mathbf{d}^-_G(v) = d^-(v)$) for all $v \in V(G)$.
If there is no danger of confusion, we omit the subscripts.

For any subset $F$ of $E(G)$, the \emph{indegree vector of $F$} is $\mathbf{d}^{-}_F \in \mathbb{Z}^{V(G)}$, where for each $v\in V(G)$, $\mathbf{d}^-_F(v)$ is the number of edges in $F$ with head $v$.

The \emph{Laplacian matrix} of a digraph $G$ is the following matrix $L\in \mathbb{Z}^{V(G)\times V(G)}$:
\[
L(u,v) = \left\{\begin{array}{cl} -d^+(v) & \text{if } u=v \\
        \overrightarrow{d}(v, u) & \text{if } u\neq v.
      \end{array} \right.
\]      

The Laplacian of a digraph $G$ defines an equivalence relation on $\mathbb{Z}^{V(G)}$, that plays a key role in graph divisor theory.
\begin{defn}[Linear equivalence on $\mathbb{Z}^{V(G)}$]
For $x, y \in \mathbb{Z}^{V(G)}$, $x\sim y$ if there exists a $z\in \mathbb{Z}^{V(G)}$ such that $x = y + Lz$.
\end{defn}
Note that this is indeed an equivalence relation. The following claim is straightforward. 

\begin{claim}\label{clm:equi-group-welldefined}
For $x, y \in \mathbb{Z}^{V(G)}$, $x \sim y$ implies $(-x) \sim (-y)$ and $x + a \sim y + a$ for any $a \in \mathbb{Z}^{V(G)}$.
\end{claim}

\subsection{Riemann--Roch theory on directed graphs}

In \cite{BN-Riem-Roch}, Baker and Norine established the Riemann--Roch theory of graphs. They consider undirected graphs, but their basic definitions can naturally be generalized to directed graphs, we give these definitions for digraphs.

Let $G$ be a finite digraph. 
$\Div(G)$ denotes the free Abelian group on the set of vertices of $G$. We identify $\Div(G)$ with $\mathbb{Z}^{V(G)}$. Elements of $\Div(G)$ are called \emph{divisors}. A divisor $f\in\Div(G)$ is called \emph{effective}, if $f\geq \mathbf{0}$. A divisor is called \emph{equi-effective}, if it is linearly equivalent to an effective divisor.

\begin{defn}[The rank of a divisor, \cite{BN-Riem-Roch}] Let $f\in \Div(G)$.
  \begin{equation*}
    \rank(f) = \min\{\deg(g) - 1 : \text{ $g \in \Div(G)$, $g$ is effective, $f-g$ is not equi-effective}\}.
  \end{equation*}
\end{defn}

The Laplacian matrix of an undirected graph is identical to the Laplacian matrix of the corresponding bidirected graph, hence theorems in \cite{BN-Riem-Roch} concerning undirected graphs can 
be translated for bidirected graphs. The central result of Baker and Norine is \cite[Theorem 1.12]{BN-Riem-Roch} which is an analogue of the classical Riemann--Roch theorem. We give it in its equivalent form for bidirected graphs.

\begin{thm}[Riemann--Roch theorem for bidirected graphs, \cite{BN-Riem-Roch}]
  \label{thm::BN}
  Let $G$ be a bidirected graph and let $f$ be a divisor on $G$. Then
  \[
 	\rank(f) - \rank(K_G - f) = \deg(f) - \mathfrak{g} + 1
  \]
  where $\mathfrak{g} = \frac12 |E(G)| - |V(G)| + 1$ and the canonical divisor $K_G$ satisfies $K_G(v)=d^+(v)-2$ for each $v\in V(G)$.
\end{thm}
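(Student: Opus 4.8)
The plan is to pass entirely to the chip-firing side of the duality and prove the identity there, in essence redressing the combinatorial argument of Cori and le Borgne in chip-firing clothing. The translations we rely on are: linear equivalence of divisors is reachability under chip-firing and anti-firing moves; a divisor $f$ is equi-effective exactly when, after choosing a sink $q$ and passing to the $q$-reduced representative of its class, the value at $q$ is nonnegative; and this $q$-reduced representative is precisely what Dhar's burning algorithm (equivalently, chip-firing stabilization with sink $q$) outputs, hence is unique in its class. With this dictionary the definition of $\rank$ becomes combinatorial: $\rank(f)+1$ is the least $\deg(E)$ over effective $E$ for which $f-E$ fails to be equi-effective, and since the obstruction to equi-effectiveness is read off from $q$-reduced forms for all sinks $q$, one can analyse it vertex by vertex.

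The first substantive step is to describe the non-equi-effective divisors of largest degree via orientations. For an acyclic orientation $\dirG$ of $G$ with a unique source $q$, set $\nu_{\dirG} := \mathbf{d}^-_{\dirG} - \mathbf{1}_G$. Running the burning algorithm from $q$ and using acyclicity (vertices burn in topological order) shows $\nu_{\dirG}$ is $q$-reduced; since $\nu_{\dirG}(q)=-1$ it is not equi-effective, and $\deg(\nu_{\dirG}) = |E(G)|-|V(G)| = \mathfrak{g}-1$. Conversely, a $q$-reduced divisor with value $-1$ at $q$ has the form $\nu_{\dirG}$ for a unique such orientation, reconstructed from the order in which vertices burn; and one checks these are exactly the divisors maximal among the non-equi-effective ones, in the sense that adding a chip at any vertex produces an equi-effective divisor. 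In particular $\deg$ is bounded above by $\mathfrak{g}-1$ on non-equi-effective divisors, i.e. $\rank(f)\ge \deg(f)-\mathfrak{g}$; applying this to $K_G-f$ as well already gives the inequality form of Riemann--Roch (the analogue of the weak theorem proved later for Eulerian digraphs).

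The second step is the symmetry: reversing every edge sends an acyclic orientation with a unique source to another one (the source becomes the sink and vice versa), and for each vertex $v$ the divisor of the reversed orientation takes value $d^+_{\dirG}(v)-1$, so $\nu_{\dirG}(v) + (d^+_{\dirG}(v)-1) = d(v)-2 = K_G(v)$; thus $f\mapsto K_G-f$ permutes the maximal non-equi-effective classes. The final step uses this to upgrade the inequality to the asserted equality. One writes $\rank(f)+1$ as the minimum, over acyclic orientations $\dirG$ with a unique source and over representations $f\sim \nu_{\dirG}+E-E'$ with $E,E'$ effective, of $\deg(E)$; by the symmetry of the previous step, $\rank(K_G-f)+1$ is the same minimum but of $\deg(E')$; and in any such representation $\deg(E)-\deg(E') = \deg(f)-(\mathfrak{g}-1)$. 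If one can realize both minima by a single representation $f\sim\dirG$-divisor $+\,E-E'$, then $(\rank(f)+1) - (\rank(K_G-f)+1) = \deg(f)-\mathfrak{g}+1$, which is the theorem.

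The main obstacle is exactly that last point: showing that the minima defining $\rank(f)$ and $\rank(K_G-f)$ are simultaneously attained by one representation $f\sim \nu_{\dirG}+E-E'$, so that $\deg(E)$ and $\deg(E')$ are at once as small as possible. This is the combinatorial heart of the theorem (and is equivalent to the abstract Riemann--Roch criterion of Baker and Norine); proving it rests on the uniqueness of $q$-reduced divisors together with a careful argument, uniform over the choice of sink $q$, that no representation of smaller positive or negative part exists. A secondary and more routine task is to check that the chip-firing game with a chosen sink on our bidirected graphs has the abelian property and that Dhar's algorithm genuinely certifies $q$-reducedness, so that the dictionary used throughout is valid.
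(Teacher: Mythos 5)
Your outline follows the same route as the paper's proof (acyclic orientations as the extremal non-equi-effective divisors, reversal of orientations realizing the involution $f\mapsto K_G-f$), but as written it is not a complete proof, and the gap is not where you think it is. The ``main obstacle'' you defer at the end is illusory: once you have the two facts stated in your first two steps --- that $\rank(f)+1=\min\deg(E)$ and $\rank(K_G-f)+1=\min\deg(E')$, both minima taken over the \emph{same} (nonempty) set of representations $f\sim\nu_{\dirG}+E-E'$ with $E,E'$ effective --- you are already done, because taking degrees in any such representation gives $\deg(E)-\deg(E')=\deg(f)-\deg(\nu_{\dirG})=\deg(f)-\mathfrak{g}+1$, a constant independent of the representation. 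Two functions on the same set differing by a constant attain their minima at the same points, so the ``simultaneous realization'' is automatic; no further combinatorial argument is needed there. (This is exactly what the paper's identity $(\mathbf{d}-x')+b=(\mathbf{d}-\varrho)+a$ accomplishes: a single witness $a$ for $\dist(x)$ is converted into a witness $b$ for $\dist(\mathbf{d}-x)$ of the correct degree.)

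The genuine gap is in your first step, which you dispose of with ``one checks.'' The identity $\rank(f)+1=\min\deg(E)$ requires that \emph{every} non-equi-effective divisor $h$ satisfy $h\sim\nu_{\dirG}-a$ for some acyclic orientation $\dirG$ and some effective $a$. Maximality of the $\nu_{\dirG}$ among non-equi-effective divisors (adding a chip anywhere makes them equi-effective) does not by itself yield this domination statement, nor your claimed bound $\deg\le\mathfrak{g}-1$ on non-equi-effective divisors. This is the combinatorial heart of the theorem, and it is precisely what the paper proves as Proposition \ref{prop::nonterm_acikl}: in the dual language, a non-terminating game is played until every vertex has fired at least once, and ordering the vertices by their last firings produces the acyclic orientation together with the inequality $x'(v)\ge\varrho(v)$. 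Your Dhar's-algorithm route (burn from $q$ and read the orientation off the burning order) would also work, but it has to be carried out; until it is, both the inequality $\rank(f)\ge\deg(f)-\mathfrak{g}$ and the final equality remain unproved. The verifications you call secondary (abelian property, uniqueness of $q$-reduced representatives) are indeed routine.
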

We refer to $\mathfrak{g}$ as the genus of the digraph. Note that in \cite{BN-Riem-Roch}, $\mathfrak{g}$ is defined as $|E(G)| - |V(G)| + 1$, this difference is explained by the fact that each edge of an undirected graph corresponds to two edges in the bidirected one.

\subsection{Chip-firing}

  Chip-firing is a solitary game on a digraph, introduced by Bj\"orner, Lov\'asz and Shor \cite{BLS91,BL92}. 
In this game we consider a digraph $G$ with a pile of chips on each of its vertices. A position of the game, called a \emph{chip-distribution} (or just distribution) is described by a vector $x \in \mathbb{Z}^{V(G)}$, where $x(v)$ denotes the number of chips on vertex $v \in V$. Note that though originally chip-firing was defined only for non-negative chip-distributions, in this paper, we allow vertices to have a negative number of chips.  We denote the set of all chip-distributions on $G$ by $\Chip(G)$, which we again identify with $\mathbb{Z}^{V(G)}$.   
  
  The basic move of the game is \emph{firing} a vertex. It means that this vertex passes a chip to its neighbors along each outgoing edge, and so its number of chips decreases by its outdegree. In other words, firing a vertex $v$ means taking the new chip-distribution $x + L\mathbf{1}_v$ instead of $x$. Note that $\deg(x+L\mathbf{1}_v)=\deg(x)$.
  
  A vertex $v \in V(G)$ is \emph{active} with respect to a chip-distribution $x$, if $x(v) \ge d^+(v)$.
The firing of a vertex $v \in V(G)$ is \emph{legal}, if $v$ was active before the firing. 
  A \emph{legal game} is a sequence of distributions in which every distribution is obtained from the previous one by a legal firing. 
A legal game terminates if it arrives at a \emph{stable} distribution, which is a chip-distribution without any active vertices.
Chip-firing on an undirected graph is defined as chip-firing on the corresponding bidirected graph.

  The following theorem was proved by Bj\"orner and Lov\'asz.
They state their theorem only for chip-distributions $x \in \Chip(G)$ with $x\geq \mathbf{0}_G$, but it is easy to check that the proof also works for chip-distributions with negative entries.

  \begin{thm}[{\cite[Theorem 1.1]{BL92}}] \label{thm::vegesseg_kommutativ}
    Let $G$ be a digraph and let $x \in \Chip(G)$
    be a chip-distribution. Then starting from $x$, either every legal game can be continued indefinitely, or every legal game terminates after the same number of moves with the same final distribution. Moreover, the number of times a given vertex is fired is the same in every maximal legal game.
  \end{thm}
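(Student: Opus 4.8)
The plan is to reduce everything to a single combinatorial \emph{extension lemma} about firing-count vectors, and then read off all three assertions of the theorem from it by elementary bookkeeping. For a finite legal game $\alpha$ from $x$, let $\mathbf{f}_\alpha \in \mathbb{Z}^{V(G)}_+$ record how many times each vertex is fired during $\alpha$, so that the distribution reached from $x$ after $\alpha$ is precisely $x + L\mathbf{f}_\alpha$ and the number of moves of $\alpha$ equals $\deg(\mathbf{f}_\alpha)$. The elementary fact I will use repeatedly is that firing a vertex $u$ changes the chip count at a vertex $v \neq u$ by $L(v,u) = \overrightarrow{d}(u,v) \geq 0$; consequently, if $\mathbf{g},\mathbf{g}' \in \mathbb{Z}^{V(G)}_+$ satisfy $\mathbf{g} \leq \mathbf{g}'$ and $\mathbf{g}(v) = \mathbf{g}'(v)$, then $(x + L\mathbf{g})(v) \leq (x + L\mathbf{g}')(v)$, because the coordinate-$v$ difference equals $\sum_{u \neq v} \overrightarrow{d}(u,v)\bigl(\mathbf{g}'(u) - \mathbf{g}(u)\bigr) \geq 0$, the diagonal term cancelling since $\mathbf{g}$ and $\mathbf{g}'$ agree at $v$.

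\textbf{Extension lemma.} If $\sigma$ and $\tau$ are finite legal games from $x$ with $\mathbf{f}_\sigma \not\geq \mathbf{f}_\tau$, then $\sigma$ can be continued by at least one further legal firing. I would prove this by scanning $\tau = (w_1, w_2, \dots, w_m)$ and taking the least index $j$ for which the prefix $(w_1, \dots, w_j)$ fires some vertex strictly more often than $\sigma$ does; such $j$ exists precisely because $\mathbf{f}_\tau \not\leq \mathbf{f}_\sigma$. By minimality the only vertex in excess is $v := w_j$, and writing $\mathbf{g}$ for the firing-count vector of the prefix $(w_1, \dots, w_{j-1})$ one obtains $\mathbf{g} \leq \mathbf{f}_\sigma$ and $\mathbf{g}(v) = \mathbf{f}_\sigma(v)$. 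Legality of the $j$-th move of $\tau$ says $v$ is active in $x + L\mathbf{g}$, i.e.\ $(x + L\mathbf{g})(v) \geq d^+(v)$; by the monotonicity fact above, $(x + L\mathbf{f}_\sigma)(v) \geq (x + L\mathbf{g})(v) \geq d^+(v)$, so $v$ is active in the distribution reached after $\sigma$ and may be fired legally. I expect this lemma — more precisely, hitting on the right vertex $v$ (the first ``overshoot'' along $\tau$), so that the comparison vector $\mathbf{g}$ lies below $\mathbf{f}_\sigma$ yet agrees with it at $v$ — to be the only genuinely delicate point; everything else is then formal. (One could instead first establish a one-step confluence statement and bootstrap from it, but the direct route above seems cleanest.)

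Granting the lemma, the theorem follows quickly. Suppose not every legal game from $x$ can be continued indefinitely; then, starting from some such game and performing legal firings as long as possible, the process must halt, yielding a finite maximal legal game $\tau$ (one that reaches a terminal distribution). For any legal game $\sigma$ from $x$ and any finite prefix $\sigma'$ of it, the extension lemma applied to the pair $(\tau, \sigma')$ together with the non-extendability of $\tau$ forces $\mathbf{f}_\tau \geq \mathbf{f}_{\sigma'}$, hence the number of moves of $\sigma'$ equals $\deg(\mathbf{f}_{\sigma'}) \leq \deg(\mathbf{f}_\tau) = |\tau|$. Thus every legal game from $x$ is finite, so it terminates, and has at most $|\tau|$ moves. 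Finally, let $\sigma$ be any maximal legal game from $x$; it is finite by what we just proved, and applying the extension lemma to $(\sigma,\tau)$ and to $(\tau,\sigma)$ while using the maximality of both games gives $\mathbf{f}_\sigma \geq \mathbf{f}_\tau$ and $\mathbf{f}_\tau \geq \mathbf{f}_\sigma$, whence $\mathbf{f}_\sigma = \mathbf{f}_\tau$. This single equality says at once that every maximal legal game has the same number of moves $\deg(\mathbf{f}_\tau)$, fires every vertex the same number of times, and ends at the same distribution $x + L\mathbf{f}_\tau$ — exactly the second alternative of the statement, the first alternative being the remaining case, in which every legal game extends to an infinite one.
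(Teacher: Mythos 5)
Your argument is correct, and it is essentially the classical exchange-lemma proof: the paper itself gives no proof here (it only cites \cite[Theorem 1.1]{BL92}), and the Bj\"orner--Lov\'asz argument rests on the same key step of locating the first ``overshoot'' along one game to produce a legal continuation of the other. Your write-up also makes visible exactly why the paper's remark holds that the result extends to distributions with negative entries: the only inequality used is $L(v,u)=\overrightarrow{d}(u,v)\geq 0$ for $u\neq v$, which is independent of the sign of $x$.
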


  Let us call a chip-distribution $x\in \Chip(G)$ \emph{terminating}, if every legal chip-firing game played starting from $x$ terminates, and call it \emph{non-terminating}, if every legal chip-firing game played starting from $x$ can be continued indefinitely. According to Theorem \ref{thm::vegesseg_kommutativ}, a chip-distribution is either terminating or non-terminating.

  One can easily check by the pigeonhole principle that if for a digraph $G$, a distribution $x \in \Chip(G)$  has $\deg(x) > |E(G)| - |V(G)|$ then $x$ is non-terminating (see \cite{BL92}).
  From this it follows that the following quantity, which measures how far a given distribution is from being non-terminating, is well defined.

\begin{defn}[\cite{rang_NP_nehez}]
For a distribution $x \in \Chip(G)$, the \emph{distance} of $x$ from non-terminating distributions is
\[
\dist(x) = \min\{\deg(y) : y \in \Chip(G),\ y\geq \mathbf{0}_G,\ x + y \text{ is non-terminating}\}.
\]
\end{defn}

The following is a useful technical lemma from \cite{BL92}. Since in \cite{BL92} it is only proved for non-negative distributions, but we need it for integer valued distributions, we give a proof. 
\begin{lemma}
	\label{lem::eof_mind_vegt_sokszor_lo}
	On a strongly connected digraph $G$, in any infinite legal game every vertex is fired infinitely often.
\end{lemma}
\begin{proof}
	In a chip-firing game a vertex can only lose 
    chips if it is fired, but if it is fired, it is not allowed to go negative. Hence in a game with initial distribution $x$, on any vertex $v$, the number of chips is always at least $\min\{0,x(v)\}$. Hence the number of chips on any vertex is at most $\sum_{v\in V}\max\{x(v),0\}=:C(x)$ at any time. 
	
	If a legal game is infinitely long, then there is a vertex that fires infinitely often.
	If a vertex is fired infinitely often, then it passes infinitely many chips to its out-neighbors, hence the out-neighbors also need to be fired infinitely often, otherwise they would have more than $C(x)$ chips. By induction, every vertex reachable on directed path from an infinitely often fired vertex is also fired infinitely often. As the digraph is strongly connected, each vertex is reachable on directed path from each vertex, thus every vertex fires infinitely often.
\end{proof}

From Lemma \ref{lem::eof_mind_vegt_sokszor_lo} it follows that for a strongly connected digraph $G$ if $x\in\Chip(G)$ is non-terminating, then playing a legal game, after finitely many steps we arrive at a distribution which is nowhere negative. Hence there exists a non-negative chip-distribution among the non-terminating distributions of minimum degree.
From this, it follows that $\dist(\mathbf{0}_G)$ equals to the minimum degree of a non-terminating distribution on the digraph $G$. 

\subsubsection{Recurrent chip-distributions}

\begin{defn}\cite{godsil-royle}
We call a chip-distribution $x\in \Chip(G)$ \emph{recurrent} if there exists a non-empty legal game that transforms $x$ to itself. Such a game is called \emph{recurring}.
\end{defn}

Recurrent chip-distributions form an important subset of non-terminating chip-distributions. They are studied in the literature (see \cite{godsil-royle,PPW14,Hujter-Kiss-Tothmeresz.17.Reachability} and also \cite{Perrot} in a slightly different model). Here we mention some of their properties needed for the rest of the paper.

\begin{claim}\label{clm:nonterm-leads-to-recurrent}
Recurrent distributions are non-terminating. Any non-terminating chip-firing game leads to a recurrent chip-distribution after finitely many steps.
\end{claim}

\begin{proof}
The first statement is straightforward from Theorem \ref{thm::vegesseg_kommutativ}. For the second statement consider any legal game started from a non-terminating distribution $x$.  
By Lemma \ref{lem::eof_mind_vegt_sokszor_lo} there must be a time when each vertex has fired, and after that time each vertex must have at least $0$ and at most $\deg(x)$ chips at any moment. The number of such distributions is finite hence some chip-distribution must reappear during the game. Such a distribution is recurrent and is reachable from $x$.
\end{proof}

\begin{claim}\label{clm:recurring-games}
In a recurring game on a strongly connected digraph $G$, each vertex must fire. 
\end{claim}
\begin{proof}
Suppose some vertex $v$ does not fire during a recurring game. We can repeat this game indefinitely which results in a non-terminating game contradicting Lemma \ref{lem::eof_mind_vegt_sokszor_lo}.
\end{proof}

\begin{prop}\label{prop:lineq-nonterm-recurrent}
Let $G$ be a strongly connected digraph. A chip-distribution $x \in \Chip(G)$ is non-terminating if and only if there exists some recurrent distribution $x_r \in \Chip(G)$ such that $x_r \sim x$.
\end{prop}

For the proof we need a Lemma, which appeared first in \cite[Lemma 4.3.]{Bond-Levine}. To be self-contained, we give a proof.

\begin{lemma}\label{lemma::termination_equivalence} \cite{Bond-Levine}
Let $G$ be a strongly connected digraph and $x,y \in \Chip(G)$. If $x \sim y$, then $x$ is terminating if and only if $y$ is terminating.
\end{lemma}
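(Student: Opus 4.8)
The plan is to reduce the lemma to the single assertion that adding a nonnegative integer combination of columns of $L$ (i.e.\ ``firing some vertices, possibly illegally'') cannot turn a non-terminating distribution into a terminating one, and to get the symmetry needed for the reduction from a positive period vector of $G$.

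Since $\sim$ is symmetric, it is enough to show that if $x\sim y$ and $x$ is non-terminating then $y$ is non-terminating. Write $x=y+Lz$ with $z\in\mathbb{Z}^{V(G)}$. Each column of $L$ sums to $0$, so $L$ is singular, and strong connectivity of $G$ lets one produce a \emph{period vector} dominating any prescribed vector: a $\rho\in\mathbb{Z}^{V(G)}$ with $L\rho=\mathbf 0_G$ and $\rho\geq z$. Concretely, fix a non-terminating distribution — say one of degree $>|E(G)|-|V(G)|$ — and run an infinite legal game from it; by the remarks following Proposition~\ref{prop::eof_mind_vegt_sokszor_lo} this game eventually visits only finitely many distributions, so some distribution $p$ recurs, and if $\rho$ is the firing-count vector of the stretch between two occurrences of $p$ that are far enough apart, then $p+L\rho=p$ gives $L\rho=\mathbf 0_G$, while, every vertex being fired infinitely often, each coordinate of $\rho$ is as large as we like. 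With $c:=\rho-z\geq\mathbf 0_G$ we get $y=x+L(\rho-z)=x+Lc$. Thus it suffices to prove: \emph{if $u\in\Chip(G)$ is non-terminating and $c\in\mathbb{Z}^{V(G)}_+$, then $u+Lc$ is non-terminating.}

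I would prove this claim by induction on $\deg(c)$. For $\deg(c)=0$ it is trivial. For $\deg(c)\geq 1$, pick a vertex $v$ with $c(v)\geq 1$; by the induction hypothesis $u+L(c-\mathbf 1_v)$ is non-terminating, so it remains to treat the one-vertex case: $w$ non-terminating implies $w+L\mathbf 1_v$ non-terminating (here with $w=u+L(c-\mathbf 1_v)$). Since $w$ is non-terminating there is an infinite legal game $\Gamma=(w=q_0,q_1,q_2,\dots)$ with $q_i=q_{i-1}+L\mathbf 1_{e_i}$ and every firing legal, and by Proposition~\ref{prop::eof_mind_vegt_sokszor_lo} the vertex $v$ is fired in $\Gamma$; let $s$ be the least index with $e_s=v$. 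Starting from $w+L\mathbf 1_v$, perform the firings $e_1,\dots,e_{s-1}$: after the first $i$ of them the distribution is $q_i+L\mathbf 1_v$, and since $e_i\neq v$ for $i<s$ the entry $(L\mathbf 1_v)(e_i)=\overrightarrow{d}(v,e_i)$ is nonnegative, so $q_{i-1}+L\mathbf 1_v$ carries at least as many chips on $e_i$ as $q_{i-1}$; as $e_i$ is active in $q_{i-1}$, it is active in $q_{i-1}+L\mathbf 1_v$, so each of these firings is legal. After them we are at $q_{s-1}+L\mathbf 1_v=q_s$, and from $q_s$ we continue with the tail $e_{s+1},e_{s+2},\dots$ of $\Gamma$, which is legal by construction. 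This is an infinite legal game from $w+L\mathbf 1_v$, so $w+L\mathbf 1_v$ is non-terminating, and the claim, hence the lemma, follows.

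The only real obstacle is this last step, and the delicate point is exactly what makes the directed setting harder than the undirected one: one cannot simply commute an extra firing of $v$ to the front of $\Gamma$, because that would strip chips from the out-neighbours of $v$ and could break legality. The fix is to insert $L\mathbf 1_v$ \emph{before} running $\Gamma$ and to run $\Gamma$ only up to the moment it first fires $v$ on its own; until then the extra chips on $v$ can only help, the ``missing'' chips are merely those $v$ has not yet shed, and at that moment the modified run synchronizes with $\Gamma$ (it lands on $q_s$), after which $\Gamma$'s own tail does the rest. Supplying the positive period vector used in the reduction is routine given what is already in the paper.
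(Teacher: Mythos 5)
Your proof is correct, but it takes a genuinely different route from the paper's. Both arguments share the same opening reduction: use symmetry of $\sim$ to fix a direction, write the difference as $Lz$, and use a strictly positive integer vector in the kernel of $L$ to replace $z$ by a nonnegative vector (the paper simply cites \cite[Lemma 4.1]{BL92} for such a vector, whereas you re-derive one from the recurrence of an infinite legal game --- sound, but reproving an available fact). The key step then diverges. The paper works on the \emph{terminating} side: it lets the game from $x$ run to its stable configuration $x^*$, writes $x^*=y+Lz$ with $z\geq\mathbf 0_G$, plays a legal game from $y$ in which each vertex $v$ is fired at most $z(v)$ times, and shows the resulting configuration is stable by a coordinatewise comparison with $x^*$ (using that the off-diagonal entries of $L$ are nonnegative and that any still-active vertex would have exhausted its budget $z(v)$). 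You work on the \emph{non-terminating} side: you isolate the monotonicity claim that adding $L\mathbf 1_v$ (an ``illegal firing'') preserves non-termination, prove it by replaying the prefix of an infinite legal game up to the first genuine firing of $v$ and synchronizing there, and induct on $\deg(c)$. Your version has the merit of making explicit exactly where the directed setting is delicate (one cannot commute the extra firing past the out-neighbours) and of producing a reusable lemma --- ``set-firing preserves non-termination'' --- while the paper's version is shorter, avoids the induction, and directly exhibits the stable configuration reached from $y$. Both rely on Theorem \ref{thm::vegesseg_kommutativ} for the terminating/non-terminating dichotomy.
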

\begin{proof}
  By symmetry, it is enough to prove that if $x$ is terminating, then $y$ is also terminating.
  
  Let $x$ be a terminating chip-distribution. Play the chip-firing game starting from $x$ until it terminates. Let the final configuration be $x^*$.
  Clearly, $x^*\sim x\sim y$.
  Let $z\in\mathbb{Z}^{V(G)}$ be the vector with $x^*=y+Lz$. We can suppose that $z\in \mathbb{Z}^{V(G)}_+$, since the Laplacian of a strongly connected digraph has a strictly positive eigenvector with eigenvalue zero \cite[Lemma 4.1]{BL92}. 
  Start a game from $y$ in the following way: If there is an active vertex $v$ that has been fired less than $z(v)$ times, then one such vertex is fired. If there is no such vertex, the game ends.
  Clearly, after at most $\sum_{v\in V(G)} z(v)$ steps, this modified game ends.
  We claim that for the final distribution $y'=y+Lz'$ (where $z'\leq z$), $y'(v)<d^+(v)$ for each vertex $v$.
   Indeed, as the game stopped, for any vertex $v$ with $y'(v)\geq d^+(v)$, $z'(v)=z(v)$. As $x^*$ is stable, $x^*(v)<d^+(v)$.
  But then from $x^*=y'+L(z-z')$ and $z(v)=z'(v)$, we get $d^+(v)> x^*(v)\geq y'(v)$, which is a contradiction.
\end{proof} 

\begin{cor}\label{cor::dist_reprezentasfgtl}
 If $x\sim y$, then $\dist(x)=\dist(y)$.
\end{cor}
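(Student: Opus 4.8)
The plan is to reduce the statement directly to Lemma~\ref{lemma::termination_equivalence} together with the terminating/non-terminating dichotomy of Theorem~\ref{thm::vegesseg_kommutativ}. The only algebraic input needed is that linear equivalence is translation-invariant: if $x = y + Lz$ for some $z \in \mathbb{Z}^{V(G)}$, then for \emph{any} $w \in \Chip(G)$ we also have $x + w = (y+w) + Lz$, so $x + w \sim y + w$. This is immediate from the definition of $\sim$.

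Given this, I would apply Lemma~\ref{lemma::termination_equivalence} to the pair $x+w$, $y+w$: being linearly equivalent on a strongly connected digraph, $x+w$ is terminating if and only if $y+w$ is terminating. By Theorem~\ref{thm::vegesseg_kommutativ} every chip-distribution is either terminating or non-terminating, so equivalently $x+w$ is non-terminating if and only if $y+w$ is non-terminating. Hence the two sets
\[
\{w \in \Chip(G) : w \geq \mathbf{0}_G,\ x+w \text{ non-terminating}\}
\quad\text{and}\quad
\{w \in \Chip(G) : w \geq \mathbf{0}_G,\ y+w \text{ non-terminating}\}
\]
coincide. Taking the minimum of $\deg(w)$ over this common set — which is by definition $\dist(x)$ on the one hand and $\dist(y)$ on the other — gives $\dist(x) = \dist(y)$.

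There is essentially no obstacle here; the statement is an immediate corollary. The only points worth a moment's care are that $G$ is strongly connected (which is precisely what makes both Lemma~\ref{lemma::termination_equivalence} and the dichotomy applicable), and that in the definition of $\dist$ one should vary the added non-negative distribution $w$ while keeping it fixed when comparing $x$ and $y$ — which is why translation-invariance of $\sim$, rather than any deeper property, is exactly the right tool.
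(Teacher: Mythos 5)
Your argument is correct and is exactly the intended one: the paper states this as an immediate corollary of Lemma~\ref{lemma::termination_equivalence} without writing out the details, and your observation that $x+w \sim y+w$ for every fixed $w \geq \mathbf{0}_G$ (so the two feasible sets in the definition of $\dist$ coincide) is precisely the step being left implicit.
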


\begin{proof}[Proof of Proposition \ref{prop:lineq-nonterm-recurrent}]
The "if" part is guaranteed by Lemma \ref{lemma::termination_equivalence} and the fact that recurrent distributions are non-terminating. The "only if" part is implied by the fact that a non-terminating game leads to a recurrent position (by Claim \ref{clm:nonterm-leads-to-recurrent}).
\end{proof}

\subsection{Duality between chip-distributions and graph divisors}

In this section, we describe the duality between graph divisor theory and the chip-firing game. This duality was first discovered by Baker and Norine \cite{BN-Riem-Roch}. Here we generalize it to directed graphs.

\begin{prop}
    \label{prop::dual_pair}
$x \in \Chip(G)$ is terminating if and only if $\mathbf{d^+}-\mathbf{1}-x$ is equi-effective, 
i.e. there exists $\mathbf{0} \leq y \in \Div(G)$ such that $y \sim (\mathbf{d^+}-\mathbf{1}-x)$. 
\end{prop}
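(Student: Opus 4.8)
The plan is to exploit the affine involution $x \mapsto \mathbf{d^+}-\mathbf{1}-x$ on $\Chip(G)=\Div(G)$, which should interchange terminating distributions with equi-effective divisors, and to reduce both implications to Lemma \ref{lemma::termination_equivalence}. The one elementary fact I would record first is that linear equivalence is preserved by this map: if $x\sim y$, say $x=y+Lz$, then $(\mathbf{d^+}-\mathbf{1}-x)=(\mathbf{d^+}-\mathbf{1}-y)+L(-z)$, so $\mathbf{d^+}-\mathbf{1}-x\sim\mathbf{d^+}-\mathbf{1}-y$ as well. (More generally $\sim$ is invariant under translation and negation.)

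For the ``only if'' direction, suppose $x$ is terminating. I would play any legal game from $x$; by Theorem \ref{thm::vegesseg_kommutativ} it terminates, reaching a stable distribution $x^*$ with $x^*\sim x$ and, since no vertex is active, $x^*(v)<d^+(v)$ for every $v\in V(G)$. Then $\mathbf{d^+}-\mathbf{1}-x^*\geq\mathbf{0}$, i.e.\ it is effective, and by the translation remark $\mathbf{d^+}-\mathbf{1}-x^*\sim\mathbf{d^+}-\mathbf{1}-x$. Hence $y:=\mathbf{d^+}-\mathbf{1}-x^*$ witnesses that $\mathbf{d^+}-\mathbf{1}-x$ is equi-effective.

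For the ``if'' direction, suppose there is an effective $y$ with $y\sim\mathbf{d^+}-\mathbf{1}-x$. I would set $x':=\mathbf{d^+}-\mathbf{1}-y$. Since $y\geq\mathbf{0}$, we get $x'(v)=d^+(v)-1-y(v)\leq d^+(v)-1<d^+(v)$ for each $v$, so no vertex is active with respect to $x'$, and $x'$ is trivially terminating (the game has already halted). By the translation remark $x'=\mathbf{d^+}-\mathbf{1}-y\sim\mathbf{d^+}-\mathbf{1}-(\mathbf{d^+}-\mathbf{1}-x)=x$, so Lemma \ref{lemma::termination_equivalence} yields that $x$ is terminating.

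I do not expect a real obstacle: the argument is essentially the observation that a stable distribution is exactly one whose image under $x\mapsto\mathbf{d^+}-\mathbf{1}-x$ is effective, glued to Lemma \ref{lemma::termination_equivalence}. The only points requiring a little care are (i) checking that $\sim$ is compatible with the affine map $x\mapsto\mathbf{d^+}-\mathbf{1}-x$, and (ii) noting that the ``if'' direction is where strong connectedness enters, through Lemma \ref{lemma::termination_equivalence}; the ``only if'' direction uses only Theorem \ref{thm::vegesseg_kommutativ}.
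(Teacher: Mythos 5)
Your proposal is correct and follows essentially the same route as the paper's own proof: stabilize $x$ to get an effective $\mathbf{d^+}-\mathbf{1}-x^*$ in one direction, and in the other direction observe that $\mathbf{d^+}-\mathbf{1}-y$ is stable and invoke Lemma \ref{lemma::termination_equivalence}. The only difference is that you make explicit the (easy) compatibility of $\sim$ with the map $x\mapsto\mathbf{d^+}-\mathbf{1}-x$, which the paper leaves implicit.
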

\begin{proof}
On the one hand, if $x$ is terminating then we play the game until it terminates at some chip distribution $x^* \leq \mathbf{d^+}-\mathbf{1}$. Now $\mathbf{0} \leq (\mathbf{d^+}-\mathbf{1}-x^*) \sim (\mathbf{d^+}-\mathbf{1}-x)$.

On the other hand, if for $y \geq \mathbf{0}$, $y \sim (\mathbf{d^+}-\mathbf{1}-x)$, then the chip distribution $x' = \mathbf{d^+}-\mathbf{1} - y$ has no active vertex. As $y \sim (\mathbf{d^+}-\mathbf{1}-x)$, we have $x \sim x'$, and then $x$ is terminating by Lemma \ref{lemma::termination_equivalence}.
\end{proof}

We say that divisor $f \in \Div(G)$ is the \emph{dual pair of} chip-distribution $x \in \Chip(G)$ if they satisfy $f+x =  \mathbf{d^+}-\mathbf{1}$. The following is a straightforward consequence of
Proposition \ref{prop::dual_pair}.

\begin{cor}
    \label{cor::dual_rank=dist}
If divisor $f \in \Div(G)$ is the dual pair of chip-distribution $x \in \Chip(G)$, then
\[
    \rank(f) = \dist(x)-1.
\]
\end{cor}

Corollary \ref{cor::dual_rank=dist} enables us to state equivalent forms of Riemann--Roch-type theorems for graphs using the notion of $\dist$, see Sections \ref{s:RR-type-ineq} and \ref{sec::RR_general}.

\section{Non-terminating chip-distributions and turnback arc sets}
\label{sec:turnback-arc-set}

\begin{defn}
A \emph{feedback arc set} of a digraph $G$ is a set of edges
$F \subseteq E(G)$ such that the digraph $G'=(V(G), E(G) \setminus F)$
is acyclic. We denote
\[
  \minfas(G) = \min\{|F| : F \subseteq E(G) \text{ is a feedback
  arc set}\}.
\]
\end{defn}

It has already been pointed out by Björner--Lovász \cite{BL92} and Perrot--Pham \cite{Perrot} that non-terminating chip-distributions of digraphs are in connection with feedback arc sets.

In this section, our goal is to give a good characterization of non-terminating chip-distributions based on a subclass of feedback arc sets, called turnback arc sets. 

\begin{defn}
A \emph{turnback arc set} of a digraph $G$ is a set of edges
$T \subseteq E(G)$ such that the digraph $G'$ we get by reversing the edges in $T$
is acyclic.
\end{defn}

It is straightforward to check that any turnback arc set is also a feedback arc set. On the other hand, each minimal feedback arc set is also a turnback arc set, which was first observed by Gallai \cite{Gallai}. 
As \cite{Gallai} is not an accessible source, we provide a proof for this fact. 

\begin{prop}\label{prop:tas-fas-orderings}
Let $G$ be a digraph and let $T,F \subset E(G)$.
\begin{enumerate}
	\item[(i)] $T$ is a turnback arc set $\Leftrightarrow$ $T$ is the set of backward edges with respect to some ordering of $V(G)$.
    \item[(ii)] $T$ is a turnback arc set $\Leftrightarrow$ $E(G) \setminus T$ is a turnback arc set.
    \item[(iii)] $F$ is a feedback arc set $\Leftrightarrow$ $F$ contains a turnback arc set as a subset.
\end{enumerate}
\end{prop}

We only prove the $\Rightarrow$ implication of (iii) as all the other parts are straightforward consequences of the definitions and the previous statements. The authors learned the following nice argument from Darij Grinberg.
\begin{proof}[Proof of the $\Rightarrow$ part of (iii)]
Let $F \subseteq E(G)$ be any feedback arc set and let $G'$ denote the digraph that we get from $G$ by deleting the edges of $F$. 
As $G'$ is acyclic, there is a topological ordering $v_1,v_2,\ldots,v_{V(G)}$ with no backward edge in $G'$. Now let $T$ be the set of backward edges in $G$ with respect to this ordering. Clearly, $T \subset F$ and $T$ is a turnback-arc set by part (i) of this proposition.
\end{proof}

\begin{cor}
Let $G$ be a digraph, then 
\[
	\mathcal{E}_{FAS} \supseteq \mathcal{E}_{TAS} \supseteq \mathcal{E}_{mFAS} \supseteq \mathcal{E}_{mcFAS} 	    
\]
where $\mathcal{E}_{FAS}$ denotes the set of feedback arc sets of $G$, $\mathcal{E}_{TAS}$ denotes the set of turnback arc sets of $G$, $\mathcal{E}_{mFAS}$ denotes the set of minimal feedback arc sets of $G$ and $\mathcal{E}_{mcFAS}$ denotes the set of minimum cardinality feedback arc sets of $G$
\end{cor}

\begin{prop}\label{prop:feedback-most-half}
$\minfas(G) \leq \frac12|E(G)|$ holds for any digraph $G$. Equality holds if and only if $G$ is a bidirected graph.
\end{prop}
\begin{proof}
 
Let $F_1$ denote the set of forward arcs and let $F_2$ denote the set of backward arcs with respect to an arbitrary ordering of $V(G)$.
By Proposition \ref{prop:tas-fas-orderings}, $F_1$ and $F_2$ are both feedback arc sets. 
As $E(G)$ is the disjoint union of $F_1$ and $F_2$, at least one of them must contain at most $\frac12 |E(G)|$ edges.

In a bidirected graph, each turnback arc set must contain exactly one version of each bidirected edge. It also follows that a graph $G$ has $\minfas(G)=\frac{1}{2}|E(G)|$ if and only if each turnback arc set has cardinality $\frac{1}{2}|E(G)|$

Suppose that $G$ is not bidirected. Then there is a pair of vertices $(u,v)$ with $\overrightarrow{d}(u,v) \neq \overrightarrow{d}(v,u)$. Consider orderings $u,v,v_3,\dots,v_{n} $ and $v,u,v_3,\dots,v_{n}$. 
The set of forward arcs has different cardinality for these two orderings, implying $\minfas(G) < \frac{1}{2}|E(G)|$
\end{proof}

\subsection{Recurrent chip-distributions and turnback arc sets}

The next lemma and its proof is the detailed version of the "note added in proof" at the end of \cite{BL92}.

\begin{lemma}\label{lemma::recurrent-turnback-general}
Let $G$ be a strongly connected digraph and let $x \in \Chip(G)$ be a  recurrent chip--distribution. Then there exists some turnback arc set $T\subseteq E(G)$ such that $x \geq \mathbf{d}^-_T$.
\end{lemma}
\begin{proof}
Let $x$ be recurrent. Consider any recurring game started from $x$. By Claim \ref{clm:recurring-games}, each vertex must fire at least once during this game.

Let $v_1,v_2, \dots v_{|V(G)|}$ be the ordering of the vertices by the time of their last firing. Let $T$ be the set of backward edges with respect to this ordering. $T$ is a turnback arc set by Proposition \ref{prop:tas-fas-orderings}. 
We show that $x(v_i)\geq d^-_T(v_i)$ for every $i \in \{ 1,2,\ldots,|V(G)| \}$.
After its last firing, $v_i$ had a nonnegative number of chips.
Since then, it kept all chips it received. And as $v_{i+1}, \dots, v_{|V(G)|}$ all fired since the last firing of $v_i$, it received at least
$\displaystyle \sum_{j=i+1}^{|V(G)|} \overrightarrow{d}(v_j,v_i)=d^-_T(v_i)$ chips. So indeed, we have $x(v_i)\geq d^-_T(v_i)$.
\end{proof}

The next lemma, which is a variant of \cite[Lemma 2.4.]{Perrot} shows that for Eulerian digraphs, the converse implication of Lemma \ref{lemma::recurrent-turnback-general} is also true. We note that Lemma 3.2 from \cite{BN-Riem-Roch} (which is a key lemma there) is the special case of the ``if'' direction of the Lemma \ref{lemma::recurrent-turnback-Eulerian} for bidirected graphs, worded in the divisor language. It would also be possible to word Lemma 3.7 in the divisor language.

\begin{lemma}\label{lemma::recurrent-turnback-Eulerian} 
Let $G$ be an Eulerian digraph. A chip-distribution $x \in \Chip(G)$ is recurrent if and only if there exists some turnback arc set $T\subseteq E(G)$ such that $x \geq \mathbf{d}^-_T$.
\end{lemma}
\begin{proof}
Lemma \ref{lemma::recurrent-turnback-general} implies the ``only if'' direction.
For the ``if'' direction, let $T$ be a turnback arc-set satisfying $x \geq \mathbf{d}^-_T$. 
Let $G_T$ be the digraph we get by reversing the edges of $T$. From the definition of turnback arc set, $G_T$ is an acyclic digraph, hence there is a topological ordering $v_1,v_2,\ldots,v_{|V(G)|}$ for $G_T$.
We can legally fire each vertex once according to this ordering. Indeed, let $x_0=x$ and for all $i \in \{1,2,\ldots,|V(G)|\}$, let $x_i$ denote the chip-distribution obtained by firing vertex $v_{i}$ in $x_{i-1}$. In $x_{i-1}$, vertex $v_i$ has all the chips it gathered by the firings of $v_1,\ldots,v_{i-1}$, hence
\[
	x_{i-1}(v_i) = x(v_i) + \sum_{j=1}^{i-1} \overrightarrow{d}(v_j,v_i) 
    \geq 
    d^-_T(v_i) + \sum_{j=1}^{i-1}  \overrightarrow{d}(v_j,v_i) = d^-(v_i) = d^+ (v_i).
\]
We conclude that $v_i$ is active with respect to $x_{i-1}$.

As $G$ is Eulerian, $L \mathbf{1}_G = \mathbf{0}_G$ holds, hence after firing each vertex once, we get back to chip-distribution $x$, proving that $x$ is recurrent.
\end{proof}

Lemmas \ref{lemma::recurrent-turnback-general} and \ref{lemma::recurrent-turnback-Eulerian} with Proposition \ref{prop:lineq-nonterm-recurrent} imply the following theorem, which is the main structural result of this section.

\begin{thm}\label{thm::nonterm_turnback}
Let $G$ be an Eulerian digraph.
A chip-distribu\-ti\-on $x\in \Chip(G)$ is non-terminating if and only if there exists a turnback arc set $T$ of $G$, and a chip-distribution $y\in\Chip(G)$ with $y\geq \mathbf{0}_G$, such that $x \sim \mathbf{d}^-_T + y$.
\end{thm}

\begin{rem}
Lemma \ref{lemma::recurrent-turnback-Eulerian} and Theorem \ref{thm::nonterm_turnback} do not hold for non-Eulerian digraphs. The simplest counterexample is a digraph on two vertices $v_1$ and $v_2$ with two edges from $v_1$ to $v_2$ and one edge from $v_2$ to $v_1$. Then this latter edge forms a turnback arc set in itself, but its indegree vector forms a stable distribution.
\end{rem}
\begin{rem}
Theorem \ref{thm::nonterm_turnback} remains true if one substitutes the term "turnback arc set" with the term "feedback arc set" or "minimal feedback arc set". The reason for choosing "turnback arc set" here is the usefulness of part (ii) of Proposition \ref{prop:tas-fas-orderings}.
\end{rem}

\begin{defn}
Let us call a chip-distribution $x\in\Chip(G)$ \emph{minimally non-ter\-mi\-na\-ting}, if it is non-terminating, but for each $v\in V(G)$, $x-\mathbf{1}_v$ is terminating.
\end{defn}

Theorem \ref{thm::nonterm_turnback} implies that for Eulerian digraphs, the minimally non-terminating chip-distributions are linearly equivalent to the indegree vectors of minimal feedback arc sets. It will turn out in Section \ref{sec::RR_general} that minimally non-terminating distributions are very important from the point of view of Riemann--Roch theorems.

\section{A Riemann--Roch-type inequality for Eulerian digraphs}\label{s:RR-type-ineq}

The following theorem is the main result of this paper.
\begin{thm}[Riemann--Roch inequality for Eulerian digraphs]\label{thm:eulerian-minfas-divisor}
Let $G$ be an Eulerian digraph. Then for any divisor $f$, the inequality 
\[
    \deg(f) - \mathfrak{g}_{\max} + 1
    \leq \rank(f) - \rank(K_G - f) \leq
    \deg(f) - \mathfrak{g}_{\min} + 1
\]
holds with
\[
	\mathfrak{g}_{\max} = |E(G)| - \minfas(G) - |V(G)| + 1 \text{ and } \mathfrak{g}_{\min} = \minfas(G) - |V(G)| + 1.
\]
The canonical divisor $K_G$ satisfies $K_G(v)=d^+(v)-2$ for each $v\in V(G)$.
\end{thm}

\begin{rem}
\begin{enumerate}
	\item Proposition \ref{prop:feedback-most-half} implies that Theorem \ref{thm:eulerian-minfas-divisor} is a generalization of Theorem \ref{thm::BN}.
    \item The theorem is sharp in the sense that for any digraph $G$, there exist divisors $f_{\max}$ and $f_{\min}$ satisfying equalities with $\mathfrak{g}_{\max}$ and $\mathfrak{g}_{\min}$. For details see Remark \ref{rem:minfas-sharpness}.
	\item The bounds of Theorem \ref{thm:eulerian-minfas-divisor} are conjectured by Amini and Manjunath in \cite[Section 6.2.]{lattice}, but are only proved for the special case when each edge has multiplicity at least one by using a limiting argument. For the general case they only obtain weaker bounds.
\end{enumerate}
\end{rem}

We prove Theorem \ref{thm:eulerian-minfas-divisor} by translating it to to language of chip-firing games. By Corollary \ref{cor::dual_rank=dist},  Theorem \ref{thm:eulerian-minfas-divisor} is equivalent to the following statement.

\begin{thm}\label{thm:eulerian-minfas}
Let $G$ be an Eulerian digraph. For each $x\in\Chip(G)$,
\[
\minfas(G)-\deg(x)\leq\dist(x)-\dist(\mathbf{d}^+_G-x)\leq |E(G)|-\minfas(G)-\deg(x).
\]
\end{thm}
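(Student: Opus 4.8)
The plan is to imitate the proof of the undirected Riemann--Roch theorem given just above, with two changes: acyclic orientations are replaced by turnback arc sets via Proposition~\ref{prop::nonterm_turnback}, and the role played there by the fact that ``the reversal of an acyclic orientation is acyclic'' is now played by the observation that the \emph{complement} of a turnback arc set is again a turnback arc set.

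First I would reduce the two-sided estimate to a single inequality. Setting $y = \mathbf{d}^-_G - x$ we have $\deg(y) = |E(G)| - \deg(x)$, and a short rearrangement shows that the claimed upper bound for $x$ is exactly the claimed lower bound for $y$. Hence it suffices to prove, for every $x \in \Chip(G)$,
\[
\dist(\mathbf{d}^-_G - x) \le \dist(x) + \deg(x) - \minfas(G);
\]
applying this both to $x$ and to $\mathbf{d}^-_G - x$ then yields the two desired inequalities.

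To prove the displayed inequality, I would choose $a \ge \mathbf{0}$ with $\deg(a) = \dist(x)$ and $x + a$ non-terminating. By Proposition~\ref{prop::nonterm_turnback} there is a turnback arc set $T$ of $G$, with indegree distribution $\varrho$, and a chip-distribution $b \ge \mathbf{0}$ with $\varrho + b \sim x + a$. Put $x' = \varrho + b - a$; then $x' \sim x$ and $\deg(x') = \deg(x)$, so by Corollary~\ref{cor::dist_reprezentasfgtl} it is enough to bound $\dist(\mathbf{d}^-_G - x')$. From $x' + a = \varrho + b$ we get $(\mathbf{d}^-_G - x') + b = (\mathbf{d}^-_G - \varrho) + a$. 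Now comes the key step: reversing the edges of $E(G) \setminus T$ in $G$ amounts to reversing $T$ (which produces an acyclic digraph) and then reversing all edges, so $E(G)\setminus T$ is again a turnback arc set of $G$, and its indegree distribution is precisely $\mathbf{d}^-_G - \varrho$. Since $a \ge \mathbf{0}$, Lemma~\ref{lemma::euler_ir_felett_vegtelen} shows $(\mathbf{d}^-_G - \varrho) + a$ is non-terminating, hence so is $(\mathbf{d}^-_G - x') + b$, and therefore $\dist(\mathbf{d}^-_G - x') \le \deg(b) = \deg(x) + \dist(x) - |T|$. Finally every turnback arc set is a feedback arc set (a directed cycle in $(V(G), E(G)\setminus T)$ would survive in the digraph obtained by reversing $T$, which is acyclic), so $|T| \ge \minfas(G)$ and the inequality follows.

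I expect the only real content to be this complement-of-a-turnback-set observation; everything else is bookkeeping ($\deg(\mathbf{d}^-_G) = |E(G)|$, translation-invariance of $\sim$, attainment of the minimum in $\dist$). It is also worth noting where tightness is lost compared with the undirected case: Proposition~\ref{prop::nonterm_turnback} produces some turnback arc set $T$ whose size we cannot control, and in a general Eulerian digraph turnback arc sets do not all have the same cardinality, so we can only bound $|T|$ from below by $\minfas(G)$ and obtain an inequality rather than an identity — in the bidirected case every turnback arc set has size $|E(G)|/2 = \minfas(G)$, and the two bounds collapse to the Baker--Norine equality.
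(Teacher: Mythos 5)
Your proposal is correct and follows essentially the same route as the paper's proof: reduce to the single inequality $\dist(\mathbf{d}^-_G-x)\le\dist(x)+\deg(x)-\minfas(G)$ via the substitution $y=\mathbf{d}^-_G-x$, produce a turnback arc set from a minimal non-terminating augmentation using Proposition~\ref{prop::nonterm_turnback}, and exploit that the complement of a turnback arc set is again a turnback arc set. Your direct justifications of the two small facts (the complement observation via double reversal, and $|T|\ge\minfas(G)$ because every turnback arc set is a feedback arc set) are both valid and match what the paper obtains from Corollary~\ref{c:minfas=mintas}.
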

\begin{proof}[Proof of Theorem \ref{thm:eulerian-minfas}]
  First, we prove that 
\[  
\dist(\mathbf{d^+}-x) \leq \dist(x) - \minfas(G) + \deg(x).
\]
  From the definition of $\dist(x)$, there exists a chip-distribution $a\geq \mathbf{0}_G$ with $\deg(a)=\dist(x)$ such that $x+a$ is non-terminating.
  By Theorem \ref{thm::nonterm_turnback}, there exists a turnback arc set $T$ of $G$ and a chip-distribution $b \geq \mathbf{0}_G$ such that 
\begin{equation*}
	x+a \sim \mathbf{d}^-_T+b.
\end{equation*}    
Our next goal is to show that $(\mathbf{d^+}-x) + b$ is non-terminating. 
Claim \ref{clm:equi-group-welldefined} implies that 
$\mathbf{d}^+ - x + b \sim \mathbf{d}^+ - \mathbf{d}^-_T + a$. As $G$ is Eulerian 
\[
    \mathbf{d}^+ - \mathbf{d}^-_T + a =
    (\mathbf{d}^- - \mathbf{d}^-_T) + a =
    \mathbf{d}^-_{E(G) \setminus T} + a.
\]
By Proposition \ref{prop:tas-fas-orderings}, $E(G) \setminus T$ is also a turnback arc set. 
We have that 
\[
    (\mathbf{d}^+ - x) + b \sim 
    \mathbf{d}^-_{E(G) \setminus T} + a,
\]
with $a \geq \mathbf{0}_G$. Theorem \ref{thm::nonterm_turnback} gives
that $(\mathbf{d^+}-x) + b$ is non-terminating.
Hence
\[
	\dist(\mathbf{d^+}-x) \leq 
	\deg(b) =
    \deg(x)+\deg(a)-\deg(\mathbf{d}^-_T)
\]
Proposition \ref{prop:tas-fas-orderings} implies $\deg(\mathbf{d}^-_T) \geq \minfas(G)$, which gives the desired lower bound:
\[ 
	\minfas(G)-\deg(x) \leq \dist(x)-\dist(\mathbf{d^+}-x).
\]
  For the upper bound, let $y=\mathbf{d^+}-x$. Then $x=\mathbf{d^+}-y$. From the above argument, we have 
\begin{eqnarray*}  
  \dist(x) ~=~ \dist(\mathbf{d^+}-y)
  &\leq & \dist(y) -\minfas(G) +\deg(y) \\
  &=& 
  \dist(\mathbf{d^+}-x) - \minfas(G) + (|E(G)| - \deg(x)) \\
  &=& \dist(\mathbf{d^+}-x) + |E(G)| - \minfas(G)-\deg(x),
\end{eqnarray*}
giving $\dist(x)-\dist(\mathbf{d^+}-x)\leq |E(G)|-\minfas(G)-\deg(x)$.
\end{proof}
\begin{rem}\label{rem:minfas-sharpness}
Theorem \ref{thm:eulerian-minfas} is sharp in the following sense: for any Eulerian digraph $G$, taking $x$ to be the indegree-distribution of a minimum cardinality turnback arc set, $\dist(x) = \dist(\mathbf{d^+}-x) = 0$, hence 
\[
	\dist(x) - \dist(\mathbf{d^+}-x) = \minfas(G) - \deg(x).
\] 
On the other hand, for $y = \mathbf{d^+}-x$, 
\[
	\dist(y) - \dist(\mathbf{d^+}-y) = |E(G)| - \minfas(G) - \deg(y).
\]    
\end{rem}

\begin{rem}
If we specialize the proof of Theorem \ref{thm:eulerian-minfas} to bidirected graphs and translate it to the language of graph divisors, we get back the proof of Cori and le Borgne for the Riemann--Roch theorem of graphs \cite{cori}.
\end{rem}

\subsection{A non-Eulerian counterexample for the Riemann--Roch inequality}
 
We show by an example that Theorem \ref{thm:eulerian-minfas} does not always hold for non-Eulerian digraphs.
 
\begin{figure}[ht]
\begin{center}
\begin{tikzpicture}
  \GraphInit[vstyle=Dijkstra]
	\Vertex[x=0,y=2,L=$v_1$]{a1}    
	\Vertex[x=2,y=2,L=$v_2$]{a2}    
	\Vertex[x=2,y=0,L=$v_3$]{a3}
	\Vertex[x=0,y=0,L=$v_4$]{a4}
  \tikzset{EdgeStyle/.style={line width=1,->}}
        \Edge(a2)(a3)
  \tikzset{EdgeStyle/.style={bend right=15,line width=1,->}}
    \Edge(a1)(a2)
    \Edge(a2)(a1)
    \Edge(a2)(a4)
    \Edge(a4)(a2)
    \Edge(a3)(a4)
    \Edge(a4)(a3)
    \Edge(a4)(a1)
    \Edge(a1)(a4)
\end{tikzpicture}
\end{center}
\caption{$G_0$, a non-Eulerian digraph where the Riemann--Roch inequality does not hold}\label{fig::RR_ineq_counterex}
\end{figure}
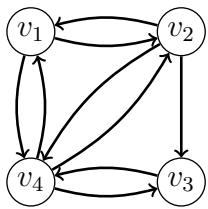

Let $G_0$ be the following digraph (see also Figure \ref{fig::RR_ineq_counterex}).
\begin{eqnarray*}
V(G_0) &=& \{v_1,v_2,v_3,v_4\};\\
E(G_0) &=& \{\overrightarrow{v_1v_2},\overrightarrow{v_1v_4},\overrightarrow{v_2v_1},\overrightarrow{v_2v_3},\overrightarrow{v_2v_4},\overrightarrow{v_3v_4},\overrightarrow{v_4v_1},\overrightarrow{v_4v_2},\overrightarrow{v_4v_3}\}.
\end{eqnarray*}

It is easy to see that $\minfas(G_0)=4$. Indeed, $\minfas(G_0)\geq 4$ as there are 4 edge-disjoint cycles, but it is easy to construct a feedback arc set with 4 edges. Hence 
\begin{eqnarray*}
	\mathfrak{g}_{\max} &=& |E(G)| - \minfas(G) - |V(G)| + 1 = 2;\\
    \mathfrak{g}_{\min} &=& \minfas(G) - |V(G)| + 1 = 1.
\end{eqnarray*}    

One can check by a simple computation that for the Laplacian $L_0$ of $G_0$, the following equation holds for any triplet $(a,b,c)$ of integers
\[
	L_0 \cdot
    \begin{bmatrix} 
    a + 3b + 4c \\ 
    a + 2b + 3c \\
    3a + 6b + 7c \\ 
    2a + 4b + 5c 
    \end{bmatrix} = 
    \begin{bmatrix}
    -2 & 1 & 0 & 1 \\
    1 & -3 & 0 & 1 \\ 
    0 & 1 & -1 & 1 \\
    1 & 1 & 1 & -3 
    \end{bmatrix} \cdot
    \begin{bmatrix} 
    a + 3b + 4c \\ 
    a + 2b + 3c \\
    3a + 6b + 7c \\ 
    2a + 4b + 5c 
    \end{bmatrix} 
    =
    \begin{bmatrix} a \\ b \\ c \\ -(a+b+c) \end{bmatrix}. 
\]
which means that there is only one linear equivalence class of a given degree for $G_0$. Therefore a divisor $f$ is equi-effective if and only if its degree is non-negative, and if $\deg(f) \geq 0$, then $\rank(f) = \deg(f)$.

Now let $f_1=(0,0,0,-1)$ and $f_2=(0,1,-1,2)$ be divisors on $G_0$ (note that $f_1+f_2 = K_{G_0}$). The following computations show that both sides of Theorem \ref{thm:eulerian-minfas-divisor} are violated:
\begin{eqnarray*}
	\rank(f_1) - \rank(K_{G_0} - f_1) = -1 - 2 =
    -3 &<& -2 = \deg(f_1) - \mathfrak{g}_{\max} + 1; \\
    \rank(f_2) - \rank(K_{G_0} - f_2) = 2 - (-1) = 
    3 &>& 2 = \deg(f_2) - \mathfrak{g}_{\min} + 1.
\end{eqnarray*}

\section{Riemann--Roch-type equations for general digraphs}
\label{sec::RR_general}

\subsection{A condition for the Riemann-Roch property in strongly connected digraphs}

It is an interesting question whether a Riemann--Roch type \emph{equation} can be true for general strongly connected digraphs. This question was investigated before in \cite{asadi,lattice}. Here we show that the setting of Riemann--Roch theorems on digraphs is equivalent to the setting of the abstract Riemann--Roch theorem of Baker and Norine \cite[Section 2]{BN-Riem-Roch} and the setting of Amini and Manjunath \cite{lattice}, and give the graphical equivalent of their theorems. This way we can give a more intuitive meaning of the genus. The graphical version is also useful for constructing examples.
 
In the setting of Baker and Norine, $\Div(X)$ is a free Abelian group on a finite set $X$, which is equipped with an equivalence relation satisfying two given properties, (E1) and (E2). These properties are equivalent to the fact that the differences of equivalent divisors form a lattice $\Gamma\subset \mathbb{Z}_0^n$, where $\mathbb{Z}_0^n$ denotes those vectors of $\mathbb{Z}^n$ whose coordinates sum to zero. Amini and Manjunath consider this latter situation, i.e., for them, divisors are elements of $\mathbb{Z}^n$ and for a fixed lattice $\Gamma\subset \mathbb{Z}_0^n$ they call two divisors equivalent if their difference is in $\Gamma$. The case of divisor theory on strongly connected digraphs corresponds to the case if the lattice $\Gamma$ is generated by the Laplacian matrix of a strongly connected digraph. Since by a theorem of Perkinson, Perlman and Wilmes \cite[Theorem 4.11]{PPW14}, each lattice $\Gamma\subset \mathbb{Z}_0^n$ can be generated by the Laplacian matrix of a strongly connected digraph, the graphical case is indeed equivalent to the other two.

Baker--Norine and Amini--Manjunath both obtain a necessary and sufficient condition in their setting for the existence of a Riemann--Roch formula (\cite[Theorem 2.2]{BN-Riem-Roch} and \cite[Theorem 1.4]{lattice}). In this section, we deduce the graphical equivalent of these necessary and sufficient conditions.

We say that a strongly connected digraph $G$ \emph{has the Riemann--Roch property} if there exists some $C\in\Chip(G)$ and integer $t$, such that for each $x\in \Chip(G)$,
\[
	\dist(x)-\dist(C-x)= t-\deg(x).
\]
In this case we say that $C$ is a \emph{canonical distribution} for $G$. Examples of Section \ref{ss::examples} show that such $C$ and $t$ does not always exist for a digraph.

From the divisor-theoretic point of view, the existence of such $C$ and $t$ implies, that for the divisor $K=2\cdot \mathbf{d}^+_G-2\cdot \mathbf{1}_G-C$, each divisor $f\in \Div(G)$ has
\[
	\rank(f)-\rank(K - f)= t-|E(G)|+|V(G)|+\deg(f).
\]

\begin{prop}\label{prop::K_chipszama}
If for a digraph $G$, the Riemann--Roch formula
\[
	\dist(x)-\dist(C-x)= t-\deg(x)
\]
holds for all $x\in \Chip(G)$ for some $C\in\Chip(G)$ and value $t$, then 
\begin{itemize}
	\item[(i)] $\deg(C)=2t$,
	\item[(ii)] $t=\dist(\mathbf{0}_G)$.
\end{itemize}
\end{prop}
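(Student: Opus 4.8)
For part (i) the plan is to use the symmetry of the Riemann--Roch formula under the involution $x\mapsto K-x$. Applying the hypothesis to $x$ gives $\dist(x)-\dist(K-x)=t-\deg(x)$, while applying it with $K-x$ in place of $x$ gives $\dist(K-x)-\dist(x)=t-\deg(K-x)=t-\deg(K)+\deg(x)$. Adding these two identities, the left-hand sides cancel and one is left with $0=2t-\deg(K)$, i.e. $\deg(K)=2t$.

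For part (ii) I would prove the two inequalities $t\le\dist(\mathbf{0}_G)$ and $t\ge\dist(\mathbf{0}_G)$ separately. For the first, recall from the discussion at the end of Section~2 that $\dist(\mathbf{0}_G)$ equals the minimum degree of a non-terminating distribution; so fix a non-terminating $x$ with $\deg(x)=\dist(\mathbf{0}_G)$. Then $\dist(x)=0$, and the Riemann--Roch formula at this $x$ reads $-\dist(K-x)=t-\dist(\mathbf{0}_G)$, that is, $\dist(K-x)=\dist(\mathbf{0}_G)-t$. Since $\dist$ is always nonnegative, this forces $t\le\dist(\mathbf{0}_G)$.

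For the reverse inequality I would first record the general lower bound $\dist(x)\ge\dist(\mathbf{0}_G)-\deg(x)$, valid for every $x\in\Chip(G)$: if $y\ge\mathbf{0}_G$ is a minimizer in the definition of $\dist(x)$, then $x+y$ is non-terminating, hence $\deg(x)+\deg(y)=\deg(x+y)\ge\dist(\mathbf{0}_G)$, so $\dist(x)=\deg(y)\ge\dist(\mathbf{0}_G)-\deg(x)$. Now choose any $x_0\in\Chip(G)$ with $\deg(K-x_0)>|E(G)|-|V(G)|$ (for instance $x_0=-N\mathbf{1}_v$ for $N$ large, using $\deg(K-x_0)=2t-\deg(x_0)$ by part (i)); by the pigeonhole observation recalled in Section~2, $K-x_0$ is non-terminating, so $\dist(K-x_0)=0$, and the Riemann--Roch formula gives $\dist(x_0)=t-\deg(x_0)$. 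Comparing with the lower bound $\dist(x_0)\ge\dist(\mathbf{0}_G)-\deg(x_0)$ yields $t\ge\dist(\mathbf{0}_G)$. Together with the first inequality this gives $t=\dist(\mathbf{0}_G)$.

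The argument is short and essentially formal; the only inputs beyond the hypothesis are that $\dist(\mathbf{0}_G)$ is genuinely attained by some non-terminating distribution and the lower bound $\dist(x)\ge\dist(\mathbf{0}_G)-\deg(x)$, both of which are immediate from the definitions together with Theorem~\ref{thm::vegesseg_kommutativ}. I do not expect a real obstacle here; if anything, the step that deserves a line of care is the justification of the minimum-degree non-terminating distribution used in proving $t\le\dist(\mathbf{0}_G)$.
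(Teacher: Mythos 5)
Your proof is correct. Part (i) is exactly the paper's argument: apply the formula to $x$ and to $K-x$ and add. For part (ii) you take a mildly different route. The paper sets $k=\dist(K)$, reads off $\dist(\mathbf{0}_G)=k+t$ from the formula at $x=\mathbf{0}_G$, and then shows $k=0$ by evaluating the formula at a minimum-degree non-terminating $x_0$ and deriving the self-referential inequality $k\ge 2k$; the intermediate fact it extracts is that $K$ itself must be non-terminating. You instead split $t=\dist(\mathbf{0}_G)$ into two inequalities: the upper bound $t\le\dist(\mathbf{0}_G)$ comes from the same minimum-degree non-terminating distribution (equivalent to the paper's $x=\mathbf{0}_G$ step combined with $\dist(K)\ge 0$), while the lower bound uses the pigeonhole criterion ($\deg>|E(G)|-|V(G)|$ forces non-termination) applied to $K-x_0$ for $x_0$ of very negative degree, together with the general bound $\dist(x)\ge\dist(\mathbf{0}_G)-\deg(x)$. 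That general bound is also implicitly the engine of the paper's $\dist(K-x_0)\ge 2k$ step, so the ingredients largely coincide; what your version buys is avoiding the slightly delicate $k\ge 2k$ bookkeeping at the cost of invoking one extra external fact (the pigeonhole bound from Section~2, which the paper's part (ii) does not need). Both arguments are complete; your remark that the attainment of the minimum degree by an actual non-terminating distribution deserves a line of care is well placed, and it is justified by the discussion following Proposition \ref{prop::eof_mind_vegt_sokszor_lo}.
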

\begin{proof}
\emph{(i)} Take an arbitrary $x\in \Chip(G)$, and write up the Riemann--Roch formula for $x$ and for $C-x$. Note that $C-(C-x)=x$.
\begin{eqnarray*}
\dist(x)-\dist(C-x) &=& t-\deg(x) \\
\dist(C-x)-\dist(x) &=& t-\deg(C-x)
\end{eqnarray*}
Summing these two equalities, we get $2t=\deg(C-x)+\deg(x)=\deg(C)$.

\emph{(ii)} Let $\dist(C)=k$. 
The Riemann--Roch formula for $x=\mathbf{0}_G$ says that $\dist(\mathbf{0}_G) = \dist(C) + t - 0 = k+t$. By definition $k$ is non-negative, hence it is enough to show that it is non-positive (i.e.~$C$ is non-terminating).

By the definition of $\dist(\mathbf{0}_G)$, there exists a non-terminating chip-distribution $x_0$ with $\deg(x_0)=\dist(\mathbf{0}_G)=k+t$. The Riemann--Roch formula for $x=x_0$ says that 
\[
\dist(x_0)-\dist(C-x_0)=t-\deg(x_0) = -k.
\]
Since $x_0$ is non-terminating, $\dist(x_0)=0$. Hence we have $\dist(C-x_0)=k$. Using part \emph{(i)}, we have 
\[
\deg(C-x_0) = \deg(C)-\deg(x_0) = 2t - (k + t) = \dist(\mathbf{0}_G)-2k.
\]
As a non-terminating distribution has degree at least $\dist(\mathbf{0}_G)$, we necessarily have $\dist(C-x_0) \geq 2k$, which means $k\geq 2k$. This implies $k=0$.
\end{proof}

\begin{prop} \label{prop::RR=>min_vegtln_deg=dist0}
If the Riemann--Roch formula holds for a digraph $G$, then all minimally non-terminating distributions have degree $\dist(\mathbf{0}_G)$.
\end{prop}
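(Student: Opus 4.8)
The plan is to leverage Proposition~\ref{prop::K_chipszama} together with a short contradiction argument; no Eulerian hypothesis is needed. Write $t=\dist(\mathbf{0}_G)$, and recall from Proposition~\ref{prop::K_chipszama} that any canonical distribution $K$ satisfies $\deg(K)=2t$. Let $x\in\Chip(G)$ be minimally non-terminating. Since $x$ is non-terminating we have $\dist(x)=0$, so plugging $x$ into the Riemann--Roch formula gives $\dist(K-x)=\deg(x)-t$; as $\dist$ is non-negative this yields the easy half, $\deg(x)\geq t$.

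For the reverse inequality I would argue by contradiction, assuming $\deg(x)>t$, so that $\dist(K-x)=\deg(x)-t>0$. By the definition of $\dist$ there is a chip-distribution $b\geq\mathbf{0}_G$ with $\deg(b)=\dist(K-x)=\deg(x)-t$ such that $(K-x)+b$ is non-terminating; a degree count shows $\deg\big((K-x)+b\big)=2t-\deg(x)+(\deg(x)-t)=t$, so this is a non-terminating distribution of minimum possible degree. The decisive step is to feed $x-b$ back into the Riemann--Roch formula: since $K-(x-b)=(K-x)+b$ is non-terminating, $\dist(K-(x-b))=0$, and $\deg(x-b)=\deg(x)-(\deg(x)-t)=t$, so the formula forces $\dist(x-b)=0$, i.e.\ $x-b$ is itself non-terminating.

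Now I extract the contradiction. Since $\deg(b)=\deg(x)-t>0$ and $b\geq\mathbf{0}_G$, some vertex $v$ has $b(v)\geq 1$, hence $x-\mathbf{1}_v=(x-b)+(b-\mathbf{1}_v)$ with $b-\mathbf{1}_v\geq\mathbf{0}_G$. Adding chips to a non-terminating distribution keeps it non-terminating --- a legal game witnessing non-termination of $x-b$ remains legal when played from the pointwise-larger distribution $x-\mathbf{1}_v$, since every vertex that is active in the first game is active in the second --- so $x-\mathbf{1}_v$ is non-terminating, contradicting the minimal non-termination of $x$. Therefore $\deg(x)=t=\dist(\mathbf{0}_G)$.

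I do not expect a genuine obstacle: once one hits on the substitution $x\mapsto x-b$, the rest is bookkeeping with the Riemann--Roch identity and with degrees. The only input beyond Proposition~\ref{prop::K_chipszama} is the monotonicity of non-termination under adding chips, which is elementary and independent of $G$ being Eulerian.
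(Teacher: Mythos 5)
Your proof is correct and is essentially the paper's own argument: both rely on Proposition~\ref{prop::K_chipszama} to get $\deg(K)=2\dist(\mathbf{0}_G)$, both construct the auxiliary distribution $b$ (the paper's $a$) witnessing $\dist(K-x)=\deg(x)-t$, and both apply the Riemann--Roch formula a second time (your substitution $x\mapsto x-b$ is the same as the paper's $x\mapsto K-x+a$ up to the symmetry of the formula) to conclude that $x-b$ is non-terminating and hence $x$ is not minimal. The only cosmetic differences are that you derive $\deg(x)\geq t$ directly from the formula rather than from the characterization of $\dist(\mathbf{0}_G)$ as the minimum degree of a non-terminating distribution, and that you spell out the monotonicity step the paper leaves implicit.
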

\begin{proof}
Suppose that the Riemann--Roch formula holds for the digraph $G$, $x\in\Chip(G)$ is non-terminating, and $\deg(x)=\dist(\mathbf{0}_G)+k$ with $k>0$. We show that $x$ is not minimally non-terminating.

From the Riemann--Roch formula, $\dist(x)-\dist(C-x)=\dist(\mathbf{0}_G)-\deg(x)$, thus $\dist(C-x)=k$. Since $\deg(C-x)=\dist(\mathbf{0}_G)-k$, this means that there exists a chip-distribution $a\in\Chip(G)$, $a\geq \mathbf{0}_G$, $\deg(a)=k$, such that $C-x+a$ is non-terminating, and is of degree $\dist(\mathbf{0}_G)$.

By the Riemann--Roch formula, $\dist(C-x+a)-\dist(x-a)=\dist(\mathbf{0}_G)-\deg(C-x+a)$. As $\dist(C-x+a)=0$ and $\deg(C-x+a)=\dist(\mathbf{0}_G)$, we have $\dist(x-a)=0$, hence $x-a$ is non-terminating. Since $a\geq \mathbf{0}_G$, and $a\neq \mathbf{0}_G$ we conclude that $x$ is not minimally non-terminating.
\end{proof}

\begin{thm} \label{thm::szuks_elegs_felt_alt_digraf}
Let $G$ be a digraph. $G$ has the Riemann--Roch property if and only if each minimally non-terminating distribution has degree $\dist(\mathbf{0}_G)$, and there exists a distribution $C\in\Chip(G)$ such that for any minimally non-terminating distribution $x\in\Chip(G)$, $C-x$ is also minimally non-terminating. If the above condition holds, then the Riemann--Roch formula holds for $G$ with $C$ as canonical distribution.
\end{thm}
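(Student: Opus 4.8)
The plan is to prove the two implications separately, using the distribution $K$ (the candidate canonical distribution) as the bridge, and relying on two soft facts used freely in this area: \emph{monotonicity} — if $z\in\Chip(G)$ is non-terminating and $c\geq\mathbf{0}_G$ then $z+c$ is non-terminating, since by Proposition~\ref{prop::eof_mind_vegt_sokszor_lo} an infinite legal game from $z$ fires every vertex, and it remains legal from $z+c$ because extra chips only help activity — and the fact recorded after Proposition~\ref{prop::eof_mind_vegt_sokszor_lo} that every non-terminating distribution has degree at least $\dist(\mathbf{0}_G)$, this minimum being attained. In particular $\dist\geq 0$ everywhere, and any non-terminating distribution of minimum degree $\dist(\mathbf{0}_G)$ is minimally non-terminating (subtracting any $\mathbf{1}_v$ pushes the degree below $\dist(\mathbf{0}_G)$), so minimally non-terminating distributions exist.

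For the forward direction I would assume $G$ has the Riemann--Roch property with canonical distribution $K$ and value $t$. Then Proposition~\ref{prop::K_chipszama} gives $t=\dist(\mathbf{0}_G)$ and $\deg(K)=2t$, and Proposition~\ref{prop::RR=>min_vegtln_deg=dist0} gives that every minimally non-terminating distribution has degree $\dist(\mathbf{0}_G)$. It then remains only to verify the pairing property for $K$: if $x$ is minimally non-terminating then $\deg(x)=t$ and $\dist(x)=0$, so the Riemann--Roch formula forces $\dist(K-x)=0$, i.e.\ $K-x$ is non-terminating; and if $(K-x)-\mathbf{1}_v$ were non-terminating for some $v$, the formula applied to $x+\mathbf{1}_v$ would give $\dist(x+\mathbf{1}_v)-0=t-(t+1)=-1$, contradicting $\dist\geq 0$. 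Hence $K-x$ is minimally non-terminating.

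For the backward direction I would assume the two conditions hold with distribution $K$ and set $t=\dist(\mathbf{0}_G)$. Picking a minimally non-terminating $m$, condition (a) gives $\deg(m)=t$ and condition (b) gives that $K-m$ is minimally non-terminating, so $\deg(K-m)=t$ and therefore $\deg(K)=2t$. Since $K-(K-x)=x$, it suffices to prove $\dist(x)-\dist(K-x)\geq t-\deg(x)$ for all $x\in\Chip(G)$: applying this to $K-x$ and using $\deg(K)=2t$ yields the reverse inequality, hence equality, which is the Riemann--Roch formula with $K$ canonical and $t=\dist(\mathbf{0}_G)$. To prove the inequality I would choose $a\geq\mathbf{0}_G$ with $\deg(a)=\dist(x)$ and $x+a$ non-terminating, then subtract characteristic vectors from $x+a$ as long as non-termination is preserved; this halts (non-terminating distributions have degree $\geq t$) at a minimally non-terminating $m'\leq x+a$. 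Setting $b=x+a-m'\geq\mathbf{0}_G$, condition (a) gives $\deg(b)=\deg(x)+\dist(x)-t$, and condition (b) gives that $K-m'$ is non-terminating; from the identity $(K-x)+b=(K-m')+a$ with $a\geq\mathbf{0}_G$, monotonicity shows $(K-x)+b$ is non-terminating, so $\dist(K-x)\leq\deg(b)=\deg(x)+\dist(x)-t$, which is exactly the claimed inequality.

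The only slightly delicate routine points are the monotonicity lemma and the halting of the subtraction procedure. The genuinely load-bearing idea is the symmetry reduction to a single inequality together with the $m'$/$b$ decomposition: the pairing property (b) enters in precisely the right spot, since it is the non-termination of $K-m'$ that lets one certify $(K-x)+b$ as non-terminating and thereby bound $\dist(K-x)$. I do not expect an obstacle beyond this bookkeeping.
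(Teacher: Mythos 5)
Your proposal is correct and follows essentially the same route as the paper's own proof: the forward direction combines Propositions \ref{prop::K_chipszama} and \ref{prop::RR=>min_vegtln_deg=dist0} with the formula evaluated at a minimally non-terminating $x$, and the backward direction reduces to the single inequality $\dist(x)-\dist(K-x)\geq \dist(\mathbf{0}_G)-\deg(x)$ via the identity $(K-x)+b=(K-m')+a$, exactly as in the paper. The only cosmetic differences are that you certify minimality of $K-x$ by a $\dist\geq 0$ contradiction rather than by the degree bound, and you handle terminating and non-terminating $x$ uniformly instead of in two cases; neither affects correctness.
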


\begin{rem}
As we have already mentioned, Theorem \ref{thm::szuks_elegs_felt_alt_digraf} is equivalent to \cite[Theorem 2.2]{BN-Riem-Roch}. We leave the proof of this equivalence as an exercise for the reader. 
For completeness, we give a proof for Theorem \ref{thm::szuks_elegs_felt_alt_digraf}.  
\end{rem}

\begin{proof}
First we show the ``only if'' direction. Suppose that $G$ has the Riemann--Roch property with canonical distribution $C$. Then by Proposition \ref{prop::RR=>min_vegtln_deg=dist0}, each minimally non-terminating distribution has degree $\dist(\mathbf{0}_G)$. Suppose that $x$ is minimally non-terminating. Then $\dist(x)=0$ and $\deg(x)=\dist(\mathbf{0}_G)$. Since $\dist(x)-\dist(C-x)=\dist(\mathbf{0}_G)-\deg(x)=0$, we have $\dist(C-x)=0$, thus $C-x$ is non-terminating. Also, since $0=\dist(C-x)-\dist(x)=\dist(\mathbf{0}_G)-\deg(C-x)$, we have $\deg(C-x)=\dist(\mathbf{0}_G)$, thus $C-x$ is minimally non-terminating.

Now, we show the ``if'' direction. First note that $\deg(C)=2 \cdot \dist(\mathbf{0}_G)$ is a direct consequence of our conditions. It is sufficient to show that $\dist(x)-\dist(C-x)\geq \dist(\mathbf{0}_G)-\deg(x)$ holds for any $x\in\Chip(G)$. Indeed, by plugging $C-x$ into $x$, and using that $\deg(C-x) = \deg(C) - \deg(x) = 2 \cdot \dist(\mathbf{0}_G)-\deg(x)$, we get
$$
\dist(C-x)-\dist(x) \geq \dist(\mathbf{0}_G)-\deg(C-x) = \deg(x)-\dist(\mathbf{0}_G)
$$ 
which implies the equality.

$x$ is either terminating or non-terminating.

First suppose that $x$ is terminating. Then $\dist(x)=k>0$. This means that there exists $a\in\Chip(G)$, $a\geq \mathbf{0}_G$, $\deg(a)=k$ such that $x+a$ is non-terminating.

There exists at least one minimally non-terminating distribution $y$ such that $x+a=y+b$ where $b \geq \mathbf{0}_G$. (We can take off chips until our distribution gets minimally non-terminating.)

Then by our assumption, $C-y$ is also a minimally non-terminating distribution. We have $C-(x+a)=C-(y+b)$, thus $(C-x)+b=(C-y)+a$. $(C-y)+a$ is non-terminating, as $C-y$ is non-terminating and $a\geq 0$. Thus $\dist(C-x)\leq \deg(b)=\deg(x)+\deg(a)-\deg(y)=\deg(x)+\dist(x)-\dist(\mathbf{0}_G)$.

Hence 
\[
\dist(x)-\dist(C-x)\geq \dist(x)-(\deg(x)+\dist(x)-\dist(\mathbf{0}_G))=\dist(\mathbf{0}_G)-\deg(x).
\]

Now suppose that $x$ is non-terminating. Then there exists a minimally non-terminating distribution $y\in\Chip(G)$ such that $x=y+b$ where $b \geq \mathbf{0}_G$. By our assumptions, $\deg(y)=\dist(\mathbf{0}_G)$ and $C-y$ is also minimally non-terminating.

$C-x+b = C-y$. As $C-y$ is non-terminating, $\dist(C-x)\leq \deg(b)$. On the other hand, $\deg(x)=\deg(y)+\deg(b)=\dist(\mathbf{0}_G)+\deg(b)$. Thus 
\[
\dist(x)-\dist(C-x)\geq 0 - \deg(b) = 0 - (\deg(x)-\dist(\mathbf{0}_G))=\dist(\mathbf{0}_G)-\deg(x).
\]
\end{proof}

\subsection{The natural Riemann--Roch property in Eulerian digraphs}\label{ss::natural_iff_bidirected}

Asadi and Backman introduced the following variant of the Riemann--Roch property \cite[Definition 3.12]{asadi}: 
A digraph $G$ has the \emph{natural Riemann--Roch property}, if it satisfies a Riemann--Roch formula with canonical divisor $K(v)=d^+(v)-2$ for each $v\in V$.
This definition translates to the language of chip-firing in the following way:
\begin{defn}
A digraph $G$ is said to have the natural Riemann--Roch property, if for each $x\in \Chip(G)$: 
\[ 
\dist(x)-\dist(\mathbf{d}^+_G-x)=\frac{1}{2}|E(G)|-\deg(x)
\]
\end{defn}
From the Riemann--Roch theorem for undirected graphs, it follows that each
undirected (that is, bidirected) graph has the natural Riemann--Roch property. However it is left open in \cite{asadi} whether there are any other such graphs.

In Section \ref{ss::example_natural} we show an example that a non-bidirected graph can also have the natural Riemann--Roch property. 
However, the following theorem shows that among Eulerian digraphs only the bidirected graphs have the natural Riemann--Roch property.

\begin{thm}\label{thm:natural_RR}
Let $G$ be an Eulerian digraph. Then $G$ has the natural Riemann--Roch property if and only if it is a bidirected graph corresponding to an undirected graph (i.e.\ $\overrightarrow{d}(u,v) = \overrightarrow{d}(v,u)$ for any pair of vertices $u,v$).
\end{thm}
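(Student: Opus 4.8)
The ``if'' direction is already settled: a bidirected graph, regarded as an undirected graph, has the natural Riemann--Roch property by the Riemann--Roch theorem for undirected graphs (Theorem~\ref{thm::BN}), as observed just before the statement. So the plan is to prove the ``only if'' direction, and the strategy is to reduce it to a purely combinatorial statement about feedback arc sets.

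First I would record that for a strongly connected Eulerian digraph $G$ one has $\dist(\mathbf{0}_G)=\minfas(G)$. Indeed, by Proposition~\ref{prop::nonterm_turnback} every non-terminating distribution is linearly equivalent to $\varrho+a$, where $\varrho$ is the indegree vector of some turnback arc set $T$ and $a\geq\mathbf{0}_G$, so its degree is at least $|T|\geq\minfas(G)$ by Corollary~\ref{c:minfas=mintas}; and the indegree vector of a minimum turnback arc set is itself non-terminating (Lemma~\ref{lemma::euler_ir_felett_vegtelen}) and has degree $\minfas(G)$. Now if $G$ has the natural Riemann--Roch property then its canonical distribution is $\mathbf{d}^+_G$ with $t=\tfrac12|E(G)|$, so Proposition~\ref{prop::K_chipszama}(ii) gives $\tfrac12|E(G)|=\dist(\mathbf{0}_G)=\minfas(G)$. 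Hence it suffices to prove: \emph{an Eulerian digraph $G$ with $\minfas(G)=\tfrac12|E(G)|$ is bidirected.}

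For this combinatorial claim I would use that $\minfas(G)$ equals the minimum, over all orderings $\pi$ of $V(G)$, of the number of edges pointing backwards along $\pi$; this follows from the proof of Proposition~\ref{prop::gallai}, since a minimum feedback arc set is exactly the set of back-edges of a topological order of its complement. Decompose $G$ as $G_0\cup G_1$, where for every pair $u,w$ the digraph $G_0$ contains $\min(\overrightarrow{d}(u,w),\overrightarrow{d}(w,u))$ copies of each of $\overrightarrow{uw}$ and $\overrightarrow{wu}$, and $G_1$ is the rest (so $G_1$ has no anti-parallel edges and $G_0$ is bidirected). For every ordering $\pi$ exactly half of the edges of $G_0$ are backward edges, whence $\minfas(G)=\tfrac12|E(G_0)|+\minfas(G_1)$; together with $|E(G)|=|E(G_0)|+|E(G_1)|$ and the hypothesis, this forces $\minfas(G_1)=\tfrac12|E(G_1)|$. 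Finally, averaging the number of backward edges over all $|V(G)|!$ orderings gives the value $\tfrac12|E(G_1)|$, so $\minfas(G_1)=\tfrac12|E(G_1)|$ can only hold if every ordering has exactly $\tfrac12|E(G_1)|$ backward edges; but transposing two adjacent vertices $u,w$ in an ordering changes this count by $\overrightarrow{d}(u,w)-\overrightarrow{d}(w,u)$, so this difference vanishes for all $u,w$, and since $G_1$ has no anti-parallel edges it must be edgeless. Thus $G=G_0$ is bidirected.

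The main obstacle I expect is the combinatorial core — proving the additivity $\minfas(G)=\tfrac12|E(G_0)|+\minfas(G_1)$ and then squeezing out that $G_1$ is empty; the passage from the natural Riemann--Roch property to $\minfas(G)=\tfrac12|E(G)|$ is a routine application of the chip-firing dictionary developed above. A minor point to watch is that $G_1$ need not be strongly connected, but the averaging argument is valid for arbitrary digraphs, so no extra reduction is needed. (Conversely, once $\minfas(G)=\tfrac12|E(G)|$ is known, the weak Riemann--Roch theorem of Section~\ref{sec::RR_weak} immediately yields the natural Riemann--Roch property, giving an alternative proof of the ``if'' direction for Eulerian graphs.)
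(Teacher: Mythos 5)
Your proof is correct and follows essentially the same route as the paper: reduce the natural Riemann--Roch property to $\minfas(G)=\frac12|E(G)|$ via Propositions \ref{prop::K_chipszama} and \ref{prop::nonterm_turnback}, deduce that every vertex ordering has exactly half of the edges as backward arcs, and conclude with the adjacent-transposition argument. The only difference is cosmetic: the $G_0\cup G_1$ decomposition is an unnecessary detour, since your averaging-plus-transposition argument (or the paper's observation that for any ordering both the forward and the backward arc sets are feedback arc sets) applies directly to $G$ itself.
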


\begin{proof}
Suppose that $G$ has the natural Riemann--Roch property. Then we have $C(v)=d^+(v)$ $\forall v\in V(G)$, thus, $\deg(C)=|E(G)|$.
Proposition \ref{prop::K_chipszama}
implies that $\dist(\mathbf{0}_G) = \frac{1}{2} \deg(C) =\frac{1}{2} |E(G)|$.
Theorem \ref{thm::nonterm_turnback} says that for Eulerian digraphs, $\dist(\mathbf{0}_G)=\minfas(G)$.
As a consequence, we have $\minfas(G) = \frac{1}{2}|E(G)|$. 
By Proposition \ref{prop:feedback-most-half}, this is only possible if $G$ is bidirected, hence an Eulerian digraph with the natural Riemann--Roch property needs to be bidirected. As we already noted, the Riemann--Roch theorem for undirected graphs implies that bidirected graphs have the natural Riemann--Roch property.
\end{proof}

\subsection{Examples}\label{ss::examples}

In this section we provide examples showing that a digraph may not have the Riemann--Roch property, but for certain digraphs such a theorem can still hold.

\subsubsection{A digraph without Riemann--Roch property}

Consider the following digraph $G_1$ (see also Figure \ref{fig::no_RR}):
\begin{eqnarray*}
V(G_1) &=& \{v_1,v_2,v_3,v_4,v_5,v_6\}, \\
E(G_1) &=& \{\overrightarrow{v_1v_2};\overrightarrow{v_2v_3},\overrightarrow{v_3v_4},\overrightarrow{v_4v_5},\overrightarrow{v_5v_6},\overrightarrow{v_6v_1},\overrightarrow{v_1v_5},\overrightarrow{v_2v_6},\overrightarrow{v_3v_1},\overrightarrow{v_4v_2},\overrightarrow{v_5v_3},\overrightarrow{v_6v_4}\}.
\end{eqnarray*}

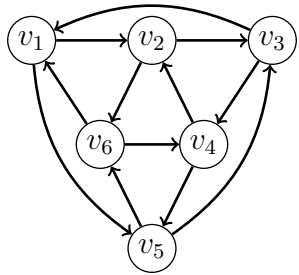
\begin{figure}[ht]
\begin{center}
\begin{tikzpicture}[scale=0.8]
\GraphInit[vstyle=Dijkstra]
	\Vertex[x=0,y=1.73,L=$v_2$]{a1}    
	\Vertex[x=2,y=1.73,L=$v_3$]{a2}    
	\Vertex[x=-2,y=1.73,L=$v_1$]{a3}
	\Vertex[x=-0.86,y=0,L=$v_6$]{a4}
	\Vertex[x=0.86,y=-0,L=$v_4$]{a5}
	\Vertex[x=0,y=-1.73,L=$v_5$]{a6}  
  \tikzset{EdgeStyle/.style={line width=1,->}}
	\Edge(a1)(a2)
	\Edge(a2)(a5)
    \Edge(a5)(a6)
	\Edge(a6)(a4)
	\Edge(a4)(a3)
    \Edge(a1)(a4)
	\Edge(a4)(a5)
	\Edge(a5)(a1)
	\Edge(a3)(a1)
  \tikzset{EdgeStyle/.style={bend right=25,line width=1,->}}
    \Edge(a2)(a3)
    \Edge(a6)(a2)
    \Edge(a3)(a6)
\end{tikzpicture}
\end{center}
\caption{$G_1$, a graph without the Riemann-Roch property}\label{fig::no_RR}
\end{figure}
It is easy to check that for $G_1$, $x_1=(1,0,0,1,0,2)$ and $x_2=(2,1,1,1,0,0)$ are both minimally non-terminating. Since $\deg(x_1)\neq \deg(x_2)$, Proposition \ref{prop::RR=>min_vegtln_deg=dist0} tells us that the Riemann--Roch formula does not hold for $G_1$. Note that $G_1$ is Eulerian, hence the Riemann--Roch property can also fail for Eulerian digraphs.

\subsubsection{Eulerian digraphs with non-natural Riemann--Roch property} 

From Theorem \ref{thm:natural_RR} if follows that we cannot have natural Rieman--Roch property if the graph is Eulerian and not bidirected.
A very simple example of an Eulerian digraph with the Riemann--Roch property is a directed cycle. It is straightforward that for a directed cycle, any chip-distribution of degree at least one is non-terminating. On the other hand, since $\minfas$=1, any chip-distribution of degree less than one is terminating.
Thus, the minimally non-terminating distributions are exactly the distributions of degree 1.
Let $C$ be any distribution of degree 2. 
It is straightforward that the conditions of Theorem \ref{thm::szuks_elegs_felt_alt_digraf} hold, thus a directed cycle has the Riemann--Roch property.

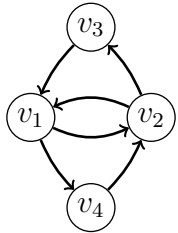
\begin{figure}[ht]
\begin{center}
\begin{tikzpicture}[scale=0.8]
  \GraphInit[vstyle=Dijkstra]
	\Vertex[x=0,y=0,L=$v_1$]{a1}    
	\Vertex[x=2,y=0,L=$v_2$]{a2}    
	\Vertex[x=1,y=1.5,L=$v_3$]{a3}
	\Vertex[x=1,y=-1.5,L=$v_4$]{a4}    
  \tikzset{EdgeStyle/.style={bend right=25,line width=1,->}}
    \Edge(a1)(a2)
    \Edge(a2)(a1)
  \tikzset{EdgeStyle/.style={bend right=10,line width=1,->}}
    \Edge(a3)(a1)
    \Edge(a2)(a3)
    \Edge(a1)(a4)
    \Edge(a4)(a2)
\end{tikzpicture}
\end{center}
\caption{$G_2$, a graph with non-natural Riemann--Roch property}\label{fig::non-natural_RR}
\end{figure}
Another example, where there are more than one equivalence classes with the same number of chips is the following graph $G_2$ (see also Figure \ref{fig::non-natural_RR}):

\[ 
V(G_2)=\{v_1,v_2,v_3,v_4\}; \quad 
E(G_2)=\{\overrightarrow{v_1v_2},\overrightarrow{v_1v_4},\overrightarrow{v_2v_1},\overrightarrow{v_2v_3},\overrightarrow{v_3v_1},\overrightarrow{v_4v_2}\}.
\]

For this graph, $\dist(\mathbf{0}_{G_2})=\minfas(G_2)=2$. It is well-known that for an Eulerian digraph, the number of equivalence classes of chip-distributions of a fixed degree equals the number of spanning in-arborescences rooted at an arbitrary vertex $v$ (this follows from the fact that the determinant of the reduced Laplacian matrix of an Eulerian digraph is equal to the number of spanning in-arborescences rooted at $v$, see \cite[Theorem 10.4]{Stanley}). Thus this graph has two equivalence classes of degree 2. It easy to check that the distribution $(2,0,0,0)$ is non-terminating, while the distribution $(1,1,0,0)$ is terminating. So for $C = (4,0,0,0)$ the conditions of Theorem \ref{thm::szuks_elegs_felt_alt_digraf} hold.

\subsubsection{A non-Eulerian digraph with natural Riemann--Roch property}\label{ss::example_natural}

We have seen in Section \ref{ss::natural_iff_bidirected} that an Eulerian digraph has the natural Riemann--Roch property if and only if it is bidirected. Here we show that there exist also non-Eulerian digraphs with the natural Riemann--Roch property. 

Let $G_3$ be the following digraph (see also Figure \ref{fig::natural_nonEulerian}).
\begin{eqnarray*}
V(G_3) &=& \{v_1,v_2,v_3,v_4\};\\
E(G_3) &=& \{\overrightarrow{v_1v_2},\overrightarrow{v_1v_4},\overrightarrow{v_2v_1},\overrightarrow{v_3v_2},\overrightarrow{v_3v_2},\overrightarrow{v_4v_3},\overrightarrow{v_4v_3},\overrightarrow{v_4v_1}\}.
\end{eqnarray*}

\begin{figure}[ht]
\begin{center}
\begin{tikzpicture}
  \GraphInit[vstyle=Dijkstra]
	\Vertex[x=0,y=0,L=$v_4$]{a1}    
	\Vertex[x=2,y=0,L=$v_3$]{a2}    
	\Vertex[x=2,y=2,L=$v_2$]{a3}
	\Vertex[x=0,y=2,L=$v_1$]{a4}   
  \tikzset{EdgeStyle/.style={bend left=25,line width=1,->}}
    \Edge(a1)(a2)
    \Edge(a2)(a3)
  \tikzset{EdgeStyle/.style={bend right=25,line width=1,->}}
    \Edge(a1)(a2)
    \Edge(a2)(a3)
    \Edge(a3)(a4)
    \Edge(a4)(a1)
    \Edge(a4)(a3)
    \Edge(a1)(a4)
\end{tikzpicture}
\end{center}
\caption{$G_3$, a non-Eulerian digraph with the natural Riemann--Roch property}\label{fig::natural_nonEulerian}
\end{figure}
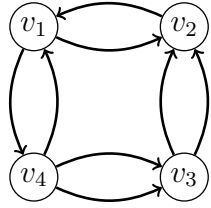

We claim that in this digraph, there is only one minimally non-terminating equivalence class, which is the equivalence class of $(1,0,0,3)$.

First, we show that a minimally non-terminating distribution needs to have degree 4. 
Note that $G_3$ is strongly connected. By Lemma \ref{lem::eof_mind_vegt_sokszor_lo}, in a non-terminating game on a strongly connected digraph, each vertex is fired infinitely often. Hence in a non-terminating game on $G_3$, there must be a time when each vertex has already fired, and $v_4$ can be fired at the moment.
Thus, each non-terminating equivalence class on $G_3$ contains an element with at least 0 chips on each vertex, and at least 3 chips on $v_4$. Hence a non-terminating chip-distribution needs to have at least 3 chips. Moreover, a non-terminating degree-3 equivalence class could only be the class of $(0,0,0,3)$, but it is easy to check that this distribution is terminating.

In a non-terminating equivalence class of degree 4, there is also an element $x$ with $x\geq (0,0,0,3)$. We have four choices for this: $(1,0,0,3),(0,1,0,3),(0,0,1,3)$ and $(0,0,0,4)$. Among these, $(1,0,0,3)\sim(0,1,0,3)\sim(0,0,0,4)$ and these are non-terminating, and $(0,0,1,3)$ is terminating. 

In the case if the degree of a chip-distribution $x$ is 5, and $x\geq (0,0,0,3)$: 
If $x-(0,0,0,3)$ has at least one chip on $v_1$ or on $v_2$ or on $v_4$, then $x\geq 
(1,0,0,3)$ or $x\geq (0,1,0,3)$ or $x\geq (0,0,0,4)$, hence it cannot be minimally non-terminating. The only remaining chip-distribution $x$ with $\deg(x)=5$ and $x\geq (0,0,0,3)$ is $(0,0,2,3)$. However, it is easy to check that $(0,0,2,2)$ is non-terminating, hence $(0,0,2,3)$ is also not minimally non-terminating.

We have $|E(G_3)|-|V(G_3)|+1=5$, hence by \cite{BL92}, each chip-distribution on $G_3$ with degree at least 5 is non-terminating. Hence a chip-distribution with degree at least 6 cannot be minimally non-terminating.

Since we only have one minimally non-terminating equivalence class, the conditions of Theorem \ref{thm::szuks_elegs_felt_alt_digraf} trivially hold with $C=(2,0,0,6)\sim (2,1,2,3)=\mathbf{d^+_{G_3}}$. Thus, $G_3$ has the natural Riemann--Roch property.

\section*{Acknowledgements}

Research was supported by the Hungarian National Research, Development and Innovation Office – NKFIH, grant no. K109240.
We would like to thank Darij Grinberg for helpful suggestions concerning the first version of the paper. 

\section*{\refname}

\bibliographystyle{abbrv}
\bibliography{chip_rr}

\end{document}